\newenvironment{keywords}{\footnotesize{\bf Keywords: }}{}
\newenvironment{AMS}{\footnotesize{\bf AMS subject classification: }}{}
\newtheorem{theorem}{Theorem}[section]
\newtheorem{lemma}[theorem]{Lemma}
\newtheorem{corollary}[theorem]{Corollary}
\newtheorem{proposition}[theorem]{Proposition}
\newtheorem{remark}{Remark}[section]
\numberwithin{equation}{section}
\numberwithin{figure}{section}
\long\def\symbolfootnote[#1]#2{\begingroup\def\thefootnote{\fnsymbol{footnote}}
\footnote[#1]{#2}\endgroup}
\author{	Alexander V.\ Shapeev\thanks{Section of Mathematics, Swiss Federal Institute of Technology (EPFL), Station 8, CH-1015, Lausanne, Switzerland 
        ({\tt alexander@shapeev.com}).
        }
}
\title{Consistent Energy-Based Atomistic/Continuum Coupling for Two-Body Potentials in One and Two Dimensions\thanks{The work was performed during the author's stay at the Chair of Computational Mathematics and Numerical Analysis (ANMC) at the Swiss Federal Institute of Technology (EPFL) whose support is acknowledged.}
}
\newcommand{\smfrac}[2]{{\textstyle\frac{#1}{#2}}}
\newcommand{\U}{{\mathcal U}}
\newcommand{\bbR}{{\mathbb R}}
\newcommand{\bbZ}{{\mathbb Z}}
\newcommand{\calI}{{\mathcal I}}
\newcommand{\calU}{{\mathcal U}}
\newcommand{\calR}{{\mathcal R}}
\newcommand{\calB}{{\mathcal B}}
\newcommand{\calT}{{\mathcal T}}
\newcommand{\calP}{{\mathcal P}}
\newcommand{\calO}{{\mathcal O}}
\newcommand{\Dc}[1]{\nabla_{\!#1}}
\newcommand{\Da}[1]{D_{\hspace{-1pt}#1}}
\def\mA{{\sf A}}
\def\mF{{\sf F}}
\newcommand{\cchi}{{\chi_{\mathstrut}}}
\def\Eqc{E^{\rm qc}}
\def\Eecc{E^{\rm ecc}}
\def\Eacc{E^{\rm acc}}
\def\Eaccmod{\tilde{E}^{\rm acc}}
\def\c{{\rm c}}
\def\a{{\rm a}}
\def\D{{\rm D}}
\def\d{{\rm d}}
\def\dx{{\rm d}x}
\def\db{{\rm db}}
\def\ea{a}
\def\eamod{\tilde{a}}
\def\ec{c}
\def\Xint#1{\mathchoice
{\XXint\displaystyle\textstyle{#1}}{\XXint\textstyle\scriptstyle{#1}}{\XXint\scriptstyle\scriptscriptstyle{#1}}{\XXint\scriptscriptstyle\scriptscriptstyle{#1}}\!\int}
\def\XXint#1#2#3{{\setbox0=\hbox{$#1{#2#3}{\int}$ }
\vcenter{\hbox{$#2#3$ }}\kern-.6\wd0}}
\def\mint{\Xint-}
\begin{document}
\sloppy
\maketitle

\begin{abstract}
This paper addresses the problem of consistent energy-based coupling of atomistic and continuum models of materials, limited to zero-temperature statics of simple crystals.
It has been widely recognized that the most practical coupled methods exhibit large errors on the atomistic/continuum interface (which are often attributed to spurious forces called ``ghost forces'').
There are only few existing works that propose a coupling which is sufficiently accurate near the interface under certain limitations.
In this paper a novel coupling that is free from ``ghost forces'' is proposed for a two-body interaction potential under the assumptions of either (i) one spatial dimension, or (ii) two spatial dimensions and piecewise affine finite elements for describing the continuum deformation.
The performance of the proposed coupling is demonstrated with numerical experiments.
The coupling strategy is based on judiciously defining the contributions of the atomistic bonds to the discrete and the continuum potential energy.
The same method in one dimension has been independently developed and analyzed in [H.~X.~Li and M.~Luskin, {\it IMA J.~Numer.~Anal.}, to appear].
\end{abstract}

\begin{keywords}
atomistic model,
consistent atomistic/continuum coupling,
ghost force removal,
atomistic bond contribution,
multiscale method,
finite element method
\end{keywords}

\begin{AMS}
65N30,    70C20,    74G15,    74G65\end{AMS}

\pagestyle{myheadings}
\thispagestyle{plain}
\markboth{ALEXANDER V. SHAPEEV}{CONSISTENT ATOMISTIC/CONTINUUM COUPLING}

\section{Introduction}

In many applications of solid mechanics, such as modeling cracks, structural defects, or nanoelectromechanical systems, the classical continuum description is not suitable, and it is required to utilize an atomistic description of materials.
However, full atomistic simulations are prohibitively expensive; hence there is a need for efficient numerical methods that couple a continuum description of a material in the region where material deformation is smooth and an atomistic description where the variations of the deformation gradient are large.

All atomistic/continuum (A/C) coupling methods can be divided into two categories: the energy-based coupling and the force-based coupling.
Energy-based coupling \cite{BroughtonAbrahamBernsteinEtAl1999, ELuYang2006, EidelStukowski2009, KleinZimmerman2006, MingYang2009, TadmorOrtizPhillips1996, WagnerLiu2003, XiaoBelytschko2004} consists of composing the energy of the system depending on both discrete and continuum deformations, and driving the system to a stable equilibrium.
A major challenge for energy-based A/C coupling is the presence of ghost forces---the spurious forces that create an error in the solution near the A/C interface that cannot be reduced by enlarging the atomistic region or refining the computational mesh in the continuum region.

In this paper the term {\it consistent} will be used as a synonym for the absence of ghost forces.
It has been recently proved in two dimensions (2D) under some rather general technical assumptions, that absence of ghost forces implies first-order consistency \cite{OrtnerZhang:consistency}.
In addition, in \cite{OrtnerShapeev2011} it is shown directly that the proposed method exhibits a first-order convergence rate.

In one dimension (1D) the problem of consistent zero-temperature static energy-based coupling is easy to solve \cite{ELuYang2006,LinShapeev,MingYang2009}.
However, in higher dimensions this is a true challenge.
In the case of nearest neighbor interaction there are methods that are free from ghost forces (the coupling of length scales method \cite{BroughtonAbrahamBernsteinEtAl1999} is an example of such a method), but in a general situation all existing coupling methods introduce a certain interfacial error.

Several methods have been proposed to address the problem of consistent energy-based coupling, including the quasinonlocal (QNL) quasicontinuum method \cite{ShimokawaMortensenSchiotzEtAl2004} and the geometrically consistent scheme (GCS) \cite{ELuYang2006, MingYang2009}, which may be regarded as a generalization of QNL.
The latter method consists of two steps: first, passing from general finite range interaction to a nearest neighbor interaction in a padding region between the atomistic and continuum region, and second, passing from the nearest neighbor atomistic model to the continuum model.
E, Lu, and Yang showed that the latter method exhibits no ghost forces in 1D and no ghost forces in higher dimensions in the case of a straight (planar) interface with no corners \cite{ELuYang2006}.

Another noteworthy effort to minimize the ghost forces is a work of Klein and Zimmerman \cite{KleinZimmerman2006} who considered a problem of coupling in arbitrarily overlapping atomistic and continuum regions.
Assuming a two-body interatomic potential, they proposed a method of computing the contributions of the atomistic bonds in the overlapping region by numerically minimizing the ghost forces using the least squares technique.

The analysis of energy-based methods reveals the following advantages of consistent energy-based methods in 1D: (i) their error can be efficiently controlled as opposed to a finite error of nonconsistent methods \cite{DobsonLuskin2009, MingYang2009, Ortner2010qnl1d}, and (ii) their region of stable deformations essentially coincides with the one of the atomistic model, whereas inconsistent methods significantly underpredict the critical strain \cite{DobsonLuskin2009, DobsonLuskinOrtner2009:qce.stab}.

In the force-based methods \cite{FishNuggehallyShephardEtAl2007, KnapOrtiz2001, KohlhoffSchmauder1989, LuanHyunMolinariEtAl2006, ShenoyMillerTadmorEtAl1999, ShilkrotMillerCurtin2002}, the equilibrium is achieved by computing the generalized forces for each degree of freedom (associated with the atoms and the finite element nodes) and driving them to zero.
The force-based methods are the most commonly used A/C coupling methods, as most of them exhibit no ghost forces by construction.
However, there are difficulties arising from the force field being nonconservative \cite{DobsonLuskinOrtner2010, dobs-qcf2, MillerTadmor2009}.

In summary, consistent energy-based methods have good accuracy and stability properties; however, the existing energy-based methods are consistent only in a few special cases.
The force-based coupling is the most popular alternative to the energy-based coupling, but the force-based methods are not sufficiently well understood at present.

In the present paper a new consistent energy-based A/C coupling is proposed.
The coupling is based on judiciously treating the atomic bonds near the A/C interface by consistently defining their contributions to the discrete and the continuum energy.
Two variants of the method are formulated; the first one couples the atomistic and the continuum energies through the atoms inside the continuum region, and the second one performs coupling only through the interface.
The scope of the present paper is limited to a two-body interaction potential, one or two spatial dimensions, and piecewise linear finite element discretization of the continuum deformation (the latter assumption is only required for the two-dimensional case).
No restrictions on the finite element mesh (except that its nodes are positioned at the lattice sites) are made.

We note that essentially the same method in one-dimensional setting has been independently proposed and analyzed by Li and Luskin \cite{LiLuskin}.

The paper is organized as follows: Section \ref{sec:1d_coupling} introduces the problem of A/C coupling and briefly discusses major difficulties.
In section \ref{sec:1d_method} the proposed methods are presented in detail in a one-dimensional setting and then extended to a  two-dimensional case in sections \ref{sec:manyD} and \ref{sec:2d:acc}.
The numerical experiments illustrating performance of the proposed methods are presented in section \ref{sec:num}.
The paper ends with a discussion of the results (section \ref{sec:discussion}) and concluding remarks (section \ref{sec:conclusion}).

\section{A/C coupling in 1D}\label{sec:1d_coupling}

In order to give a more vivid illustration of the proposed methods, we will start with a one-dimensional problem formulation followed by a detailed presentation of the proposed methods in 1D (section \ref{sec:1d_method}).
We will then show how to extend the proposed methods to 2D in sections \ref{sec:manyD} and \ref{sec:2d:acc}.

\subsection{Atomistic model}

Consider a one-dimensional atomistic material described by positions of atoms in the reference configuration as $x_i=i$ ($i\in\bbZ$).
For simplicity we set the lattice parameter $\epsilon=1$.
Let $\calI\subset\bbZ$ be a set of indices of atoms present in the atomistic material.
Atoms may displace from their reference positions $x_i$ to positions $y_i$, and their displacements are $u_i = y_i - x_i$.
The generic linear space containing $x_i$, $y_i$, and $u_i$ will be denoted as $\U = \{g:\calI\to\bbR\}$.
For elements of $\U$ introduce a finite difference operator $\Da{r}$ as follows:
\[
(\Da{r} g)_i = \Da{r} g_i = g_{i+r}-g_r
\quad\text{(for $r\in\bbZ$)}
.
\]
Note that if $i+r\notin\calI$, then $\Da{r} g_i$ is undefined; therefore, strictly speaking, $\Da{r}$ should not be considered as an operator $\U\to\U$.

The interaction of the atoms is described by the interaction potential $\varphi$ and the cut-off radius $R$, yielding the total interaction energy
\begin{equation}
E(y)
= \sum\limits_{\substack{i,j\in \calI\\ 1\le j-i\le R}} \varphi(y_j-y_i)
= \sum\limits_{\substack{i,i+r\in \calI\\ 1\le r\le R}} \varphi(\Da{r} y_i)
.
\label{eq:E-exact}
\end{equation}
Here and in what follows we assume that $y_j>y_i$ for $j>i$.
In practice, the proximity of atoms is measured in the physical domain, i.e., the atoms $i$ and $j$ contribute a nonzero interaction energy if $|y_j-y_i|\le \tilde{R}$.
However, for the purpose of studying consistency of the method, this treatment is essentially the same as $|j-i|\le R$, since these criteria are equivalent on the uniform deformation $y_i=F x_i$.

We consider the Dirichlet-type boundary conditions; namely, we fix the positions $y_i$ of atoms $i\in\calI_\D$ near the boundary:
\begin{equation}
y_i = F x_i
\quad
(i\in\calI_\D)
,
\label{eq:boundary_conditions}
\end{equation}
where $F$ is an arbitrary deformation tensor.
To avoid boundary effects, we require that every unconstrained atom $i$ has a full set of neighbors:
\begin{equation} \label{eq:Dirichlet-conditions-restriction}
(\forall i\in \calI\setminus\calI_\D) ~
(\forall r\in \bbZ:\, |r|\le R) ~~
i+r\in\calI.
\end{equation}
(See Figure \ref{fig:Dirichlet1D_illustration} for illustration.)

\begin{figure}
\begin{center}
\includegraphics{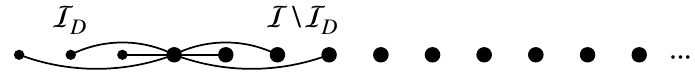}
\end{center}
\caption{An illustration of the assumption \eqref{eq:Dirichlet-conditions-restriction}: Any unconstrained atom $i\in \calI\setminus\calI_\D$ has a full set of $R$ left and $R$ right neighbors inside $\calI$.
The interaction of the leftmost unconstrained atom with the three left and the three right neighbors is displayed,
constrained atoms $i\in\calI_\D$ are shown as smaller points, and the illustration is for $R=3$.}
\label{fig:Dirichlet1D_illustration}
\end{figure}

The boundary conditions span the manifold of admissible deformations and the linear space of test functions
\[
\U_\D = \{y\in\U:\ y_i=F x_i\ \text{ for all } i\in\calI_\D \},
\qquad
\U_0 = \{u\in\U:\ u_i=0\ \text{ for all } i\in\calI_\D \}.
\]
Under the assumption \eqref{eq:Dirichlet-conditions-restriction}, the elements of $\U_0$ satisfy
\begin{equation}
\sum\limits_{i,i+r\in \calI} \Da{r} v_i = 0
\quad\text{for all}~~v\in\U_0,~r=1,\ldots,R
.
\label{eq:integral-of-derivative-is-zero}
\end{equation}

Compute the variation of $E(y)$:
\[
E'(y;v)
= \sum\limits_{\substack{i,i+r\in \calI\\ 1\le r\le R}} \varphi'(\Da{r} y_i) \Da{r} v_i.
\]
The equilibrium equations of the atomistic material under the external force $f_i$ in variational form can be written as
\begin{equation}
\textnormal{find $y\in \U_\D$:} \quad
E'(y;v) = \sum\limits_{i\in\calI} f_i v_i
\quad
\text{for all}~~v\in \U_0
.
\label{eq:exact}
\end{equation}
These equations admit the solution described by a uniform strain $y=Fx$, as is shown by the following computation:
\begin{equation}
E'(Fx;v)
= \sum\limits_{\substack{i,i+r\in \calI\\ 1\le r\le R}} \varphi'(r F) \Da{r} v_i
= \sum\limits_{1\le r\le R}  \varphi'(r F) \bigg(\sum\limits_{i,i+r\in \calI} \Da{r} v_i\bigg)
= 0
\quad \text{for all}~~v\in\U_0
,
\label{eq:variation-of-E-is-zero}
\end{equation}
which is due to \eqref{eq:integral-of-derivative-is-zero}.

The problem \eqref{eq:exact}, although discrete, is usually too large to handle on a computer.
Therefore, its approximations with reduced degrees of freedom are used.
Normally, to have a converging numerical method with the energy $\tilde{E}(y)$, one must have $\tilde{E}'(F x; v)=0$ for all $v\in\U_0$.
This relation is sometimes called the {\it ``patch test''}, a term borrowed from the theory of finite elements to describe a necessary condition for nonconforming elements to converge \cite{StrangFix1973}.
We call such approximations $\tilde{E}(y)$ {\it consistent}.
It should be noted that it has been recently proved in 2D, under rather general assumptions on the mesh, the atomistic region, and the coupling mechanism, that the absence of ghost forces implies first-order consistency \cite{OrtnerZhang:consistency}.

\subsection{Continuum model}

If the deformation gradient $y_{i+1}-y_i$ is smooth in a neighborhood of some domain $\Omega_\c$, the exact atomistic energy \eqref{eq:E-exact} can be approximated with the continuum one
\begin{equation}
E_\c(y) = \sum\limits_{r=1}^R\, \int\limits_{\Omega_\c} \varphi(\Dc{r} y) \dx,
\label{eq:E-continuum}
\end{equation}
where $\Dc{r}=r \frac{\d}{\dx}$ and $y\in W^{1,\infty}(\Omega_\c)$ is a continuum approximation to the discrete deformation (i.e., $y_i\approx y(i)$).
The formula \eqref{eq:E-continuum} is essentially the Cauchy--Born rule \cite{BornHuang1954}, where we approximate the interaction energy using the energy density $\sum_{r=1}^R \varphi(\Dc{r} y(x))$.

Compute the variation of $E_\c(y)$:
\[
E_\c'(F x, v)
= \sum\limits_{r=1}^R\, \int\limits_{\Omega_\c} \varphi'(r F) \Dc{r} v\, \dx
= \bigg(\sum\limits_{r=1}^R \varphi'(r F) r\bigg) \int\limits_{\Omega_\c} \frac{\d}{\dx}\,v(x) \dx
.
\]
If the deformation at the boundary of the continuum region is fixed, i.e., the admissible deformations $v$ satisfy $v|_{\partial\Omega_\c}=0$, then, as follows from application of the Green's formula to the above calculation, the uniform strain $y(x) = F x$ is also an equilibrium in the continuum model.

\subsection{Problem formulation}

Consider an atomistic material which in its reference configuration occupies the region $\Omega = (-N-R,N+R)$.
The material will be treated continuously in the {\itshape continuum region} $\Omega_\c = (0,N)$ and discretely in the {\itshape atomistic region} $\Omega_\a = \Omega\setminus\overline{\Omega_\c}$ (here $\overline{\mathstrut\bullet\mathstrut}$ denotes the closure of a set).
The A/C interface is $\Gamma:=\partial\Omega_\c=\{0,N\}$.
Define the atomistic lattice $\calI = \Omega\cap\bbZ$ and the atoms within the atomistic region $\calI_\a = \Omega_\a\cap\bbZ$.
The set of atoms involved in formulation of the Dirichlet-type boundary conditions \eqref{eq:boundary_conditions} is chosen as $\calI_\D = \{i\in\bbZ:\, N\le |i|<N+R\}$.
For these $\calI$ and $\calI_\D$, the relation \eqref{eq:Dirichlet-conditions-restriction} holds, and hence \eqref{eq:integral-of-derivative-is-zero} also holds.

Note that from an algorithmic point of view, fixing the positions of atoms $i\ge N+1$ (i.e., near the right boundary) is redundant, since for the continuum model in the region $(0,N)$ it is sufficient to fix the position only for the atom $i=N$.
Nevertheless, to compare the approximate model with the fully atomistic one, it is convenient to keep these atoms fixed, as we will see below.

It should also be noted that the regions were chosen in this way only for ease of visualization,
and the discussion below is valid for a more general family of regions $\Omega_\a$, $\Omega_\c$, and $\calI_\D$.
After presenting and discussing the proposed methods in 1D, we will extend the method to the two-dimensional case where we will assume a general form of the regions.

\begin{figure}
\begin{center}
\includegraphics{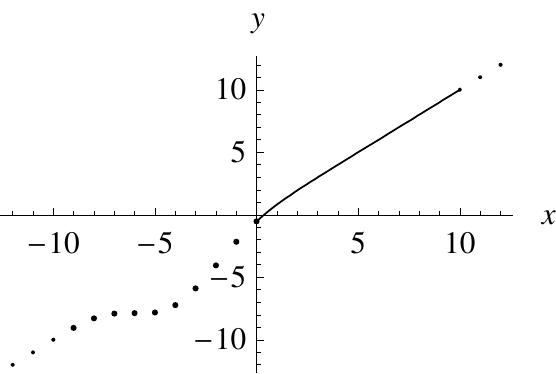}
\end{center}
\caption{A typical element $y$ of $\U$, for $N=10$ and $R=3$. The smaller points correspond to $\calI_\D$.}
\label{fig:U_element}
\end{figure}

The deformation of the material will thus be defined on $\calI_\a\cup\Gamma\cup\Omega_\c$:
\begin{align*}
\U   &= \{y: \calI_\a\cup\Omega_\c\to \bbR\,:\ y|_{\Omega_\c}\in W^{1,\infty}(\Omega_\c)\}, \\
\U_0 &= \{u\in\U:\ u_i=0\ \text{ for all } i\in\calI_\D \}.
\end{align*}
A typical element of $\U$ is illustrated in Figure \ref{fig:U_element}.
Here and in what follows we interchangeably use the notation $y_i = y(i) = y(x_i)$ for the point values of $y\in\U$.

\subsection{Quasicontinuum method}

In this subsection we formulate the energy-based quasicontinuum (QCE) method \cite{TadmorOrtizPhillips1996}, show that it has ghost forces, and describe the existing energy-based strategies of removing them.
The presence of ghost forces has been widely discussed in the literature \cite{DobsonLuskin2009, DobsonLuskinOrtner2009:qce.stab, ELuYang2006, LinShapeev, ShimokawaMortensenSchiotzEtAl2004}; hence the present discussion will be very brief and will only serve to contrast the ideas of the proposed method with the existing methods.

QCE can be defined as follows.
Write the atomistic energy in the form
\begin{equation}
\label{eq:E_preqce}
E(y) = 
		\sum\limits_{i\in\calI} \bigg(
			\frac{1}{2}
			\sum\limits_{\substack{i+r\in\calI \\ 1\le|r|\le R}} \phi(\Da{r} y_i)
		\bigg),
\end{equation}
where we define $\phi(z) = \varphi(|z|)$, and interpret the expression in the parentheses as the energy associated with a particular atom $i\in\calI$.
Then substitute the atomistic energy associated with atoms $i\notin \Omega_\a\cup\Gamma$ in the expression \eqref{eq:E_preqce} by the continuum energy:
\begin{align*}
E(y) =~& 
		\sum\limits_{i\in\calI_\a\cup\Gamma} \bigg(
			\frac{1}{2}
			\sum\limits_{\substack{i+r\in\calI \\ 1\le|r|\le R}} \phi(\Da{r} y_i)
		\bigg)
		+
		\sum\limits_{i\in\calI_\c} \bigg(
			\frac{1}{2}
			\sum\limits_{\substack{i+r\in\calI \\ 1\le|r|\le R}} \phi(\Da{r} y_i)
		\bigg)
\\ \approx~&
		\sum\limits_{i\in\calI_\a\cup\Gamma} \bigg(
			\frac{1}{2}
			\sum\limits_{\substack{i+r\in\calI \\ 1\le|r|\le R}} \phi(\Da{r} y_i)
		\bigg)
		+
		\sum\limits_{i\in\calI_\c} \bigg(
		 \frac{1}{2}
		    \int_{i-\frac12}^{i+\frac12}
			\sum\limits_{\substack{i+r\in\calI \\ 1\le|r|\le R}} \phi(\Dc{r} y) \dx
		\bigg)
\\ =~&
		\sum\limits_{i\in\calI_\a\cup\Gamma} \bigg(
			\frac{1}{2}
			\sum\limits_{\substack{i+r\in\calI \\ 1\le|r|\le R}} \phi(\Da{r} y_i)
		\bigg)
		+
		 \frac{1}{2}
		    \int_{\frac12}^{N-\frac12}
			\sum\limits_{\substack{i+r\in\calI \\ 1\le|r|\le R}} \phi(\Dc{r} y) \dx
		=: E^{\rm qce}(y)
,
\end{align*}
where the interval $(i-\smfrac12,i+\smfrac12)$ is the ``effective volume'' of an atom $i\in\calI_\c$.

To show that such a coupling has ghost forces, we let $R=2$ and show that the variation of the energy $(E^{\rm qce})'(y; v)$ for a uniform deformation $y=F x$ does not vanish.
Indeed, omitting the details of straightforward but tedious calculations, one gets
\begin{align*}
(E^{\rm qce})'(Fx; v)
=~&
	\sum\limits_{i\in\calI_\a} \bigg(
		\frac{1}{2}
		\sum\limits_{\substack{i+r\in\calI \\ r=\pm 1, \pm 2}} \phi'(r F) \Da{r} v_i
	\bigg)
	+
		 \frac{1}{2}
			\sum\limits_{\substack{i+r\in\calI \\ r=\pm 1, \pm 2}} \phi'(rF) \int_{\frac12}^{N-\frac12}\Dc{r} v \dx
\\ =~&
	\varphi'(F)\,\frac{v_1+v_0}{2}
	+
	\varphi'(2F)\,\frac{v_2+v_1+v_0+v_{-1}}{2}
	\\~& +
	\varphi'(F)\,v(\smfrac12)+\varphi'(2F)\,2v(\smfrac12)
	+ \text{(boundary terms)},
\end{align*}
where the boundary terms are concentrated near $i=N$ and are not relevant to the A/C coupling.
This can be further simplified, assuming a piecewise affine interpolation in $\Omega_\c$ (then $v(\smfrac12)=\frac{v_1+v_0}{2}$):
\begin{align*}
(E^{\rm qce})'(Fx; v)
=~&
	\varphi'(2F)\,\frac{v_2-v_1-v_0+v_{-1}}{2}
	+ \text{(boundary terms)},
\end{align*}
which is clearly nonzero.

The interpretation of presence of the nonzero (ghost) force, for instance, on atom $i=-1$ can be as follows.
The interaction of atoms $i=-1$ and $i=1$ is computed two times in a different manner: once according to the continuum strain $2 y'(x_1)$ and once according to the exact strain $y_1-y_{-1}$.
The first computation contributes the force only to the continuum region, which causes a loss of balance of forces on the atom $i=-1$.

The energy-based approach to removing the ghost forces is to modify the energy $E^{\rm qce}(y)$ in such a way that the uniform deformation $y(x) = F x$ is the solution to the equations.
In the context of the quasicontinuum method, this was first done in \cite{ShimokawaMortensenSchiotzEtAl2004} for the second nearest neighbor ($R=2$) and then generalized in \cite{ELuYang2006} to longer interactions.
However, when generalizing these methods from dimension one to higher dimensions, additional difficulties arise, for instance, related to transition between so-called element-based and atom-based summation rules near interface corners (i.e., interface edges and vertices in three dimensions) \cite{ELuYang2006}.

\section{Consistent A/C coupling in 1D}\label{sec:1d_method}

We propose a strategy of constructing a consistent coupling of the atomistic and the continuum models.
The strategy is based on splitting the energy of atomic bonds into atomistic and continuum contributions.

\subsection{Exact and continuum contributions of a bond}

We start with introducing some preliminary terms and definitions.
The term {\itshape bond} between atoms $i\in\bbZ$ and $i+r\in\bbZ$ will refer to an open interval $b = (i,i+r)$.
Introduce the set of all bonds in the atomistic system
\[
\calB := \{(i,i+r):\ 1\le r\le R,\ i\in\calI,\ i+r\in\calI \}.
\]
We denote the potential energy of a bond $b = (i,i+r)$ as
\begin{equation}
e_{(i,i+r)}(y) := \varphi(\Da{r} y_i)
\label{eq:exact-contribution-definition}
\end{equation}
and call it the {\itshape exact contribution} of the bond $(i,i+r)$ to the potential energy.
Thus, the exact atomistic potential energy \eqref{eq:E-exact} can be written as
\begin{equation}
E(y)
= \sum\limits_{b\in\calB} e_{b}(y)
.
\label{eq:E-exact-bond_contribution}
\end{equation}

For a bond $b=(i,i+r)$ fully contained in $\Omega_\c$ we define its contribution to the continuum energy or, in short, {\itshape continuum contribution}, as \[
\ec_{b}(y) := 
\frac{1}{r}\int\limits_{b} \varphi(\Dc{r} y) \dx
\qquad (\textnormal{if } b\subset\Omega_\c)
.
\]
Note that we later extend the definition of $\ec_{b}(y)$ to all bonds in \eqref{eq:continuum-contribution-definition}.

The following proposition states that the variation of the exact contribution of a bond $b$ coincides with the variation of the continuum contribution of $b$ on a uniform deformation $y=Fx$.
\begin{proposition}\label{prop:e_c_variations}
\[
\ec_{b}'(Fx; v) = e_{b}'(Fx; v)
\quad\text{for any $F>0$, $b\in\calB$, $v\in\calU$}.
\]
\end{proposition}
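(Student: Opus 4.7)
The plan is to prove the identity by direct differentiation of both sides and inspection that they yield the same quantity $\varphi'(rF)(v_{i+r}-v_i)$.

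First I would handle the exact contribution. For a bond $b=(i,i+r)\in\calB$, the definition \eqref{eq:exact-contribution-definition} gives $e_b(y)=\varphi(\Da{r} y_i)=\varphi(y_{i+r}-y_i)$. Applying the chain rule and evaluating at $y=Fx$, where $\Da{r}(Fx)_i=F(i+r)-Fi=rF$, yields
\[
e_b'(Fx;v)=\varphi'(rF)\,\Da{r} v_i=\varphi'(rF)\,(v_{i+r}-v_i).
\]

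Next I would compute the continuum contribution (assuming $b\subset\Omega_\c$ so the definition applies; the extended definition referenced after the proposition would be handled analogously). Differentiating under the integral in $\ec_b(y)=\tfrac{1}{r}\int_b \varphi(\Dc{r} y)\,\dx=\tfrac1r\int_i^{i+r}\varphi(r\,y'(x))\,\dx$ gives
\[
\ec_b'(y;v)=\frac{1}{r}\int_i^{i+r}\varphi'(r\,y'(x))\cdot r\,v'(x)\,\dx=\int_i^{i+r}\varphi'(r\,y'(x))\,v'(x)\,\dx.
\]
At $y=Fx$ the argument $r\,y'(x)=rF$ is constant in $x$, so $\varphi'(rF)$ pulls outside the integral and a single application of the fundamental theorem of calculus gives $\ec_b'(Fx;v)=\varphi'(rF)\,\bigl(v(i+r)-v(i)\bigr)$. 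Using the convention $v_i=v(i)=v(x_i)$ recalled just before the subsection, this matches the formula obtained for $e_b'(Fx;v)$, which is the claim.

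There is essentially no obstacle here: the proposition is a one-bond consistency statement and the whole point is that the Cauchy--Born energy density was built precisely so that the linear term of the Taylor expansion of $\varphi$ integrated against a test function reduces, on a constant gradient state, to the finite-difference form. The only subtle points worth flagging in the write-up are (i) the prefactor $1/r$ in the definition of $\ec_b$ is exactly what cancels the Jacobian factor $r$ produced by the chain rule on $\Dc{r}=r\,\tfrac{\d}{\dx}$, and (ii) differentiation under the integral is justified because $v\in\calU$ is Lipschitz on $\Omega_\c$ and $\varphi$ is assumed smooth. After the proposition's statement the author presumably extends $\ec_b$ to bonds straddling the interface; the same calculation will then go through because only $v'$ over the interval $b$ is used and $v|_{\Omega_\c}$ remains $W^{1,\infty}$.
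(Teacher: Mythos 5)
Your proof is correct and is essentially identical to the paper's own argument: both compute the two variations directly and observe that each reduces to $\varphi'(rF)(v_{i+r}-v_i)$, with the $1/r$ prefactor cancelling the factor $r$ from $\Dc{r}$ and the fundamental theorem of calculus converting the integral of $v'$ into the endpoint difference. Nothing further is needed.
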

\begin{proof}
The validity of this proposition is verified by the following straightforward calculation:
\begin{align*}
\ec_{(i,i+r)}'(Fx; v)
= \frac{1}{r}\int\limits_{x_i}^{x_{i+r}} \varphi'(r F) r \frac{\d v}{\dx} \dx
=
\varphi'(r F) v\big|_{x_i}^{x_{i+r}}
=&~ \varphi'(r F) (v_{i+r}-v_i)
,
\\
e_{(i,i+r)}'(Fx; v)
= \varphi'(r F) \Da{r} v_i
=&~ \varphi'(r F) (v_{i+r}-v_i)
.
\end{align*}
\end{proof}

\subsection{Method of combining exact and continuum contributions}

We now define the proposed A/C coupling method motivated by Proposition \ref{prop:e_c_variations}:
\begin{equation}
\Eecc(y) := 
\sum\limits_{\substack{b\in\calB\\ b\not\subset \Omega_\c}} e_{b}(y)
+
\sum\limits_{\substack{b\in\calB\\ b\subset \Omega_\c}} \ec_{b}(y)
=: \Eecc_{\a}(y) + \Eecc_{\c}(y),
\label{eq:ecc-def}
\end{equation}
which is obtained by substituting $e_{b}(y)$ with $\ec_{b}(y)$ in \eqref{eq:E-exact-bond_contribution} for the bonds $b$ fully contained in the continuum region.
We hence name it the method of combining {\it the exact and the continuum contributions} (ECC) of the bonds (hereinafter referred to as the {\it ECC method}).

\begin{proposition}
The ECC method \eqref{eq:ecc-def} is consistent, i.e.,
\[
(\Eecc)'(Fx; v)
=
0.
\]
\end{proposition}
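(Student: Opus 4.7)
The plan is to exploit the bond-by-bond structure of $\Eecc$ together with Proposition \ref{prop:e_c_variations}. By linearity of the variation, I can write
\[
(\Eecc)'(Fx;v)
= \sum_{\substack{b\in\calB\\ b\not\subset\Omega_\c}} e_b'(Fx;v)
+ \sum_{\substack{b\in\calB\\ b\subset\Omega_\c}} \ec_b'(Fx;v).
\]
Proposition \ref{prop:e_c_variations} gives $\ec_b'(Fx;v) = e_b'(Fx;v)$ for every bond $b\in\calB$ (in particular for every bond with $b\subset\Omega_\c$). Substituting this into the second sum merges the two sums back into a single sum of exact bond contributions, so $(\Eecc)'(Fx;v) = \sum_{b\in\calB} e_b'(Fx;v) = E'(Fx;v)$.

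It then remains to invoke the already-established identity \eqref{eq:variation-of-E-is-zero}, which states that $E'(Fx;v)=0$ for all $v\in\U_0$; this in turn rested on \eqref{eq:integral-of-derivative-is-zero}, i.e.\ the fact that the sum of the finite differences $\Da{r}v_i$ telescopes to zero under the Dirichlet-type boundary conditions baked into $\U_0$. Combining these two steps yields $(\Eecc)'(Fx;v)=0$, which is the desired consistency.

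There is essentially no serious obstacle: the whole method was designed bond-wise so that Proposition \ref{prop:e_c_variations} could be applied summand-by-summand. The only point that requires care is making sure the equality $\ec_b'(Fx;v)=e_b'(Fx;v)$ is used after the variation is already taken (it is a pointwise-in-$v$ identity at the uniform state $y=Fx$, and need not hold away from $Fx$), and tacitly noting that the statement is meant for test functions $v\in\U_0$ so that \eqref{eq:integral-of-derivative-is-zero} is available. No boundary-term bookkeeping or mesh/interface analysis is needed in 1D, which is precisely the conceptual payoff of defining the ECC coupling through individual bond contributions.
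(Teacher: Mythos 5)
Your proof is correct and follows exactly the same route as the paper: split the variation of $\Eecc$ bond-by-bond, use Proposition \ref{prop:e_c_variations} to replace $\ec_b'(Fx;v)$ by $e_b'(Fx;v)$ on the bonds contained in $\Omega_\c$, recombine into $E'(Fx;v)$, and conclude by \eqref{eq:variation-of-E-is-zero}. The additional remarks about $v\in\U_0$ and the identity holding only at the uniform state are accurate and consistent with the paper's (more terse) presentation.
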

\begin{proof}
The consistency of \eqref{eq:ecc-def} follows directly from Proposition \ref{prop:e_c_variations} and the identity \eqref{eq:variation-of-E-is-zero}:
\[
(\Eecc)'(Fx; v)
=
\sum\limits_{\substack{b\in\calB\\ b\not\subset \Omega_\c}} e_{b}'(Fx; v)
+
\sum\limits_{\substack{b\in\calB\\ b\subset \Omega_\c}} \ec_{b}'(Fx; v)
=
\sum\limits_{b\in\calB} e_{b}'(Fx; v)
=
E'(Fx; v) = 0.
\]
\end{proof}

The remainder of this subsection will be devoted to showing that $\Eecc(y)$ can be computed efficiently, i.e., without a need to go through all the bonds $b\in\Omega_\c$.
More precisely, we will show the validity of the following proposition.
\begin{proposition}\label{prop:1d:ecc_is_efficient}
The energy of the ECC method can be written as
\begin{equation}
\label{eq:1d:ecc_is_efficient}
\Eecc(y)
=
\sum\limits_{\substack{b\in \calB\\ b\not\subset \Omega_\c}} (
	e_{b}(y) - \ec_{b}(y)
)
+
E_\c(y)
,
\end{equation}
where the definition of $\ec_b(y)$ is extended to all bonds $b$ as
\begin{equation}
\ec_{b}(y) := 
\frac{1}{r}\int\limits_{\Omega_\c\cap b} \varphi(\Dc{r} y) \dx
.
\label{eq:continuum-contribution-definition}
\end{equation}
\end{proposition}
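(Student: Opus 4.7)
The plan is to reduce \eqref{eq:1d:ecc_is_efficient} to a bookkeeping identity and a covering (counting) lemma. Starting from the definition \eqref{eq:ecc-def}, I would add and subtract $\ec_b(y)$ on the bonds $b\in\calB$ with $b\not\subset\Omega_\c$, using the extended definition \eqref{eq:continuum-contribution-definition}. This is consistent with the original definition on bonds $b\subset\Omega_\c$, since $\Omega_\c\cap b = b$ in that case. The result is
\[
\Eecc(y)
=
\sum\limits_{\substack{b\in\calB\\ b\not\subset\Omega_\c}}\!\bigl(e_b(y)-\ec_b(y)\bigr)
\;+\;
\sum\limits_{b\in\calB} \ec_b(y),
\]
so the proposition will follow once I show $\sum_{b\in\calB}\ec_b(y)=E_\c(y)$.

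For the remaining identity I would fix $r\in\{1,\ldots,R\}$, use Fubini to interchange the finite sum over $i$ with the integral, and arrive at
\[
\sum\limits_{\substack{i,i+r\in\calI}} \frac{1}{r}\int\limits_{(i,i+r)\cap\Omega_\c} \varphi(\Dc{r} y)\,\dx
=
\frac{1}{r}\int\limits_{\Omega_\c} \varphi(\Dc{r} y)\,
\Bigl(\,\sum\limits_{i,i+r\in\calI} \cchi_{(i,i+r)}(x)\Bigr)\,\dx.
\]
The covering lemma I need is that for almost every $x\in\Omega_\c$ the indicator sum equals $r$. Indeed, for a non-integer $x$, the integers $i$ with $i<x<i+r$ are precisely $\lfloor x\rfloor, \lfloor x\rfloor-1,\ldots, \lfloor x\rfloor-r+1$, i.e.\ exactly $r$ of them. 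Using $\Omega_\c=(0,N)$ and $\calI=\{-N-R+1,\ldots,N+R-1\}$, together with $1\le r\le R$, one checks that all these $r$ values of $i$ and their shifts $i+r$ lie in $\calI$, so none of them is excluded from $\calB$. Summing over $r=1,\ldots,R$ then yields $\sum_{b\in\calB}\ec_b(y)=\sum_{r=1}^R\int_{\Omega_\c}\varphi(\Dc{r} y)\,\dx = E_\c(y)$, which closes the argument.

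The main (albeit very mild) obstacle is the covering lemma, and in particular the need to verify that every bond of length $r\le R$ that crosses a point of $\Omega_\c$ is a legitimate element of $\calB$, so that no terms are missing from the left-hand side. This is where the Dirichlet-layer assumption \eqref{eq:Dirichlet-conditions-restriction} and the specific choice of $\calI$ enter: they guarantee that the atoms within distance $R$ of $\Omega_\c$ are all present, which is exactly what makes the counting clean. Everything else is straightforward rearrangement.
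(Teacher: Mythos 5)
Your proposal is correct and follows essentially the same route as the paper: rearranging via the extended definition of $\ec_b$ and then reducing $\sum_{b\in\calB}\ec_b(y)=E_\c(y)$ to the fact that a.e.\ point of $\Omega_\c$ is covered by exactly $r$ bonds of direction $r$, all of which lie in $\calB$ thanks to \eqref{eq:Dirichlet-conditions-restriction}. The only difference is cosmetic: the paper isolates the covering count as a separately stated bond-density lemma (Lemma \ref{lem:1d:bond_density}, left unproved as trivial), whereas you prove it inline by the explicit enumeration $i=\lfloor x\rfloor,\ldots,\lfloor x\rfloor-r+1$.
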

According to this proposition, assembling the energy can be done as follows: We go through all the atoms in the atomistic region, computing the energy of their interaction with all other atoms and subtracting continuum contribution of the energy \eqref{eq:continuum-contribution-definition} for the bonds crossing the interface (i.e., for the bonds $b\not\subset\Omega_\c$ such that $\Omega_\c\cap b$ is nonempty).
The energy thus computed is then added to the continuum energy $E_\c(y)$.
Seen in this way, the method can be efficiently implemented with a single loop over the atoms within the atomistic region only.

Before we commence with a proof of Proposition \ref{prop:1d:ecc_is_efficient}, let us formulate, without a proof, the following result which is trivial in 1D but will be crucial in extending the method to 2D.
\begin{lemma}[one-dimensional bond-density lemma]
\label{lem:1d:bond_density}
Almost each point $x\in\bbR$ is covered by exactly $r$ ($r\in\bbZ^+$) bonds of the form $(i,i+r)$ ($i\in\bbZ$), i.e.,
\begin{equation}
\label{eq:1d:bond_density}
\sum\limits_{i\in\bbZ}\, \frac1r \chi_{(i,i+r)}(x) \underset{\rm a.e.}{=} 1
\quad\text{for all}~~r\in\bbZ^+
,
\end{equation}
where $\chi_\bullet(x)$ is the characteristic function of a set.
\end{lemma}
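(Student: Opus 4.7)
The plan is to reduce the identity to an elementary counting statement: for a fixed $r \in \mathbb{Z}^+$ and a fixed $x \in \mathbb{R}$, count the number of integers $i$ such that $x \in (i, i+r)$, and show this count is exactly $r$ for almost every $x$. Once this is established, the summand $\frac{1}{r}$ in \eqref{eq:1d:bond_density} scales the count to unity, and the lemma follows.

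First I would fix $r \in \mathbb{Z}^+$ and observe that $x \in \bbZ$ forms a measure-zero subset of $\bbR$, so it suffices to establish the identity pointwise for $x \in \bbR\setminus\bbZ$. For such $x$, the condition $x \in (i,i+r)$ is equivalent to the open inequality $x - r < i < x$. Since $x$ is not an integer, neither endpoint of the interval $(x-r, x)$ is an integer, so the set of integers lying strictly inside it is $\{\lfloor x \rfloor - r + 1, \lfloor x \rfloor - r + 2, \ldots, \lfloor x \rfloor\}$, which has exactly $r$ elements.

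Therefore $\sum_{i \in \bbZ} \chi_{(i,i+r)}(x) = r$ for all $x \in \bbR \setminus \bbZ$, and multiplying by $\frac{1}{r}$ yields the claimed identity almost everywhere. There is no real obstacle here; the only subtlety is the need to exclude lattice points, which is precisely why the equality in \eqref{eq:1d:bond_density} is stated up to a null set rather than pointwise. This is also why the author declines to write out the proof: in one dimension the statement is essentially a restatement of the elementary fact that a length-$r$ open interval covers $r$ integer-translates of the unit interval, and the interest of the lemma lies not in the 1D case itself but in setting up the language needed for the nontrivial 2D generalization alluded to in the excerpt.
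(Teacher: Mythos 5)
Your proof is correct: for fixed $r$ and $x\in\bbR\setminus\bbZ$ the condition $x\in(i,i+r)$ is exactly $x-r<i<x$, and since neither endpoint is an integer this interval contains precisely the $r$ integers $\lfloor x\rfloor-r+1,\dots,\lfloor x\rfloor$, which gives \eqref{eq:1d:bond_density} off the null set $\bbZ$. The paper deliberately states Lemma \ref{lem:1d:bond_density} without proof (calling it trivial in 1D), so there is nothing to compare against; your counting argument is the natural one the author presumably had in mind, and your closing remark correctly identifies why the exceptional set $\bbZ$ forces the ``almost everywhere'' qualifier.
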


\begin{proof}[Proof of Proposition \ref{prop:1d:ecc_is_efficient}]
First, fix $r\in\bbZ^+$ and notice that, as long as \eqref{eq:Dirichlet-conditions-restriction} holds, any bond $b=(i,i+r)$ having a nonzero intersection with $\Omega_\c$ belongs to $\calB$.
Hence we can replace the sum over all $i\in\bbZ$ in \eqref{eq:1d:bond_density} by the sum over $(i,i+r)\in\calB$:
\begin{equation}
\frac{1}{r}\sum\limits_{(i,i+r)\in\calB}\chi_{(i,i+r)}(x) \underset{\rm a.e.}{=} 1
\quad\text{for all}~~x\in\Omega_\c,~r\in\bbZ^+
.
\label{eq:bond_sum_all}
\end{equation}
Hence the continuum contribution of all the bonds in the system is
\begin{equation}
\begin{array}{r@{}l} \displaystyle
\sum\limits_{b\in\calB} \ec_{b}(y)
=~&\displaystyle
\sum\limits_{b\in\calB} \frac{1}{r} \int\limits_{\Omega_\c\cap b} \varphi(\Dc{r} y) \dx
=
\sum\limits_{b\in\calB} \frac{1}{r} \int\limits_{\Omega_\c} \chi_{b}(x) \varphi(\Dc{r} y) \dx
\\ =~&\displaystyle
\int\limits_{\Omega_\c} \varphi(\Dc{r} y) \dx
=
E_\c(y)
.
\end{array}
\label{eq:continuum-energy-as-a-sum-of-bonds}
\end{equation}
Therefore we can write $\Eecc_{\c}(y)$ (cf.\ \eqref{eq:ecc-def}) in the form
\[
\Eecc_{\c}(y)
= 
\sum\limits_{\substack{b\in \calB\\ b\subset \Omega_\c}} \ec_{b}(y)
=
	E_\c(y)
-
	\sum\limits_{\substack{b\in \calB\\ b\not\subset \Omega_\c}} \ec_{b}(y),
\]
and hence \eqref{eq:1d:ecc_is_efficient} follows.
\end{proof}

\begin{remark}
The same coupling has been independently proposed and analyzed by Li and Luskin in a one-dimensional discrete setting.
In \cite{LiLuskin} they introduce an extension of the QNL method \cite{ShimokawaMortensenSchiotzEtAl2004} to the finite-range potential (i.e., arbitrary $R$), which is essentially equivalent to the ECC method of the present work,
and they analyze it in a linearized discrete setting without coarsening (i.e., for functions $y(x)$ which are piecewise affine on each interval $i-1\le x\le i$, $i=1,2,\ldots,N$).
In particular, they prove that (i) under certain assumptions the uniform deformation is a stable equilibrium, and (ii) the coupled method converges to the exact atomistic solution.
\end{remark}

\subsection{Method of combining atomistic and continuum contributions}

The method proposed in the previous subsection consists of treating each bond that does not fully lie in the continuum region atomistically and modifying the continuum energy by subtracting the corresponding contribution from the continuum energy.
Since the positions of atoms which are strictly inside $\Omega_\c$ enter the expression of the total energy, the coupling between regions becomes nonlocal.
For implementation it could be more preferable to have a coupling through the interface only.
In this subsection we present such a method.
The details of construction of this method, however, are somewhat involved and will be needed only to better understand a similar method in 2D (section \ref{sec:2d:acc}).

The calculation \eqref{eq:continuum-energy-as-a-sum-of-bonds} indicates that the method with the exact continuum energy $E_\c(y)$ should contain all the continuum contributions $\ec_{b}(y)$.
Then we should derive an {\it atomistic contribution} $\ea_{(i,i+r)}(y)$ which would consistently balance $\ec_{(i,i+r)}(y)$.

To do that, define the operator $D_{\omega} y$ for $\omega\subset\Omega$ ($\omega\ne\emptyset$) in the following way.
If $\omega = \bigcup_{m=1}^{M} (l_m, r_m)$ is a union of nonintersecting intervals, then
\[
D_{\omega} y := \sum\limits_{m=1}^{M} (y(r_m)-y(l_m)),
\]
where $|\omega|$ is the total length of $\omega$.
$D_{\omega} y$ has the following properties:
\begin{enumerate}
\item
$\frac{1}{|\omega|} D_{\omega} y$ approximates the derivative of $y$, in particular, $\frac{1}{|\omega|} D_{\omega}(F x) = F$.
\item
$D_{(i,i+r)} y = \Da{r} y_i$.
\item
If $\omega=\omega_1\cup\omega_2$ and $\omega_1\cap\omega_2=\emptyset$, then
$D_{\omega} y = D_{\omega_1} y + D_{\omega_2} y$.
\item
$
\int\limits_{\omega} v'(x) \dx = D_{\omega} v.
$
\end{enumerate}

Using this operator define the {\itshape atomistic contribution} of a bond $b=(i,i+r)$ as
\begin{equation}
\ea_{b}(y) := 
\frac{|b\cap\Omega_\a|}{r}\varphi\Big(\frac{r}{|b\cap\Omega_\a|} \,D_{b\cap\Omega_\a} y\Big).
\label{eq:atomistic-contribution-definition}
\end{equation}
Compute the variations of the atomistic and the continuum (cf.\ \eqref{eq:continuum-contribution-definition}) contributions of a bond $(i,i+r) = b\in\calB$, using properties 4 and 1 of $D_{\omega}$:
\begin{align} \notag
\ec_{b}'(Fx; v)
= \frac{1}{r} \int\limits_{b\cap\Omega_\c} \varphi'(r F) r \frac{\d v}{\dx} \dx
=& \varphi'(r F)\,D_{b\cap\Omega_\c} v
\quad\textnormal{ and }
\\ \label{eq:ea-variation}
\ea_{b}'(Fx; v)
= \frac{|b\cap\Omega_\a|}{r} \Big(\varphi'(r F)\,\frac{r}{|b\cap\Omega_\a|}\,D_{b\cap\Omega_\a} v\Big)
=& \varphi'(r F) \, D_{b\cap\Omega_\a} v,
\end{align}
and therefore, using properties 3 and 2 of $D_{\omega}$, one can see that
\[
\ec_{b}'(Fx; v) + \ea_{b}'(Fx; v)
=
\varphi'(r F)  D_{b} v
=
\varphi'(r F) \Da{r} v_i
=
e_{b}'(Fx; v).
\]
This immediately implies that if we define the energy
\[
\Eacc(y) := \sum\limits_{b\in\calB} \ea_{b}(y) + \sum\limits_{b\in\calB} \ec_{b}(y)
=: \Eacc_{\a}(y) + \Eacc_{\c}(y),
\]
then the coupling given by $\Eacc(y)$ is consistent:
\[
(\Eacc)'(Fx; v) = \sum\limits_{b\in\calB} e_{b}'(Fx; v) = E'(Fx; v) = 0.
\]
We will call it the {\it atomistic and continuum contributions} (ACC) method.

Due to \eqref{eq:continuum-energy-as-a-sum-of-bonds} the method can be written as
\begin{equation}
\Eacc(y)
=
\sum\limits_{b\in\calB} \ea_{b}(y)
+
E_{\c}(y)
.
\label{eq:pca-definition}
\end{equation}
Note that in \eqref{eq:pca-definition} the sum over $b\in\calB$ can be effectively changed to the sum over $(b\in\calB,\, b\not\subset\Omega_\c)$, since if $b\subset\Omega_\c$, then $\ea_{b}(y)=0$.

The prominent feature of the method is that the atomistic part of the energy depends only on the deformation in the atomistic region and on the interface.
This may be more convenient for implementation (see section \ref{sec:implementation}) and can potentially help in parallelization of the method.

We conclude this discussion by presenting another version of the atomistic contribution $\ea_b(y)$.
If $b\cap\Omega_\c$ is a union of nonintersecting intervals $(\xi_m, \eta_m)$ ($1\le m\le M$), then we define
\[
\eamod_{b}(y) := \sum\limits_{m=1}^M \frac{\eta_m-\xi_m}{r} \varphi\Big(r\,\frac{y_{\eta_m}-y_{\xi_m}}{\eta_m-\xi_m}\Big).
\]
Its variation
\[
\eamod_{b}'(Fx; v)
= \sum\limits_{m=1}^M \frac{\eta_m-\xi_m}{r} \varphi'(r F) r \frac{v_{\eta_m}-v_{\xi_m}}{\eta_m-\xi_m}
= \varphi'(r F) \sum\limits_{m=1}^M (v_{\eta_m}-v_{\xi_m})
\]
coincides with the variation $\ea_{b}'(Fx; v)$ (cf.\ \eqref{eq:ea-variation}), and hence $\ea_{b}(y)$ can be substituted by $\eamod_{b}(y)$ in the approximation \eqref{eq:pca-definition}:
\[
\Eaccmod(y)
=
\sum\limits_{b\in\calB} \eamod_{b}(y)
+
E_{\c}(y).
\]
The approximation $\Eaccmod(y)$ has a potentially smaller number of coupled terms when the bonds cross the interface several times (this is more likely to happen in many dimensions), which may be easier for implementation.
In the present work the ACC method was implemented for a convex atomistic region, in which case both methods coincide.

\section{Extension of ECC to 2D}\label{sec:manyD}

Coupling atomistic and continuum models of materials becomes harder in more than 1D.
For instance, the quasinonlocal quasicontinuum method \cite{ShimokawaMortensenSchiotzEtAl2004} and its generalizations \cite{ELuYang2006, MingYang2009} suffer from ghost forces near the corners of an A/C interface.

In this section we will extend the ECC method to 2D.
The method will be consistent by construction, with no restrictions on the mesh (except that its nodes coincide with lattice sites).

In the two-dimensional case the reference configuration is usually described by a uniform lattice $x_i = \mA i$
(where $i\in\bbZ^2$, $x_i\in\bbR^2$, $\mA\in\bbR^{2\times 2}$), and the deformed configuration $y_i$ is often considered as given by some mapping $Y:\bbR^2\to\bbR^2$.
However, in this paper we adopt a slightly different point of view where we set $x_i = i$ and the uniform deformation tensor $\mA$ will be accounted for as $y_i = Y(\mA x_i)$ (this is done, for instance, in \cite{Legoll2009}).
This point of view is actually closer to computer implementation, where the lattice is indexed with integers rather than real numbers.
We stress that this is not a limitation of the proposed methods; in fact the numerical examples (section \ref{sec:num}) will be presented for a hexagonal lattice with $\mA = \begin{pmatrix} 1&-1/2\\ 0 & \sqrt{3}/2 \end{pmatrix}$.

\subsection{Preliminaries}\label{sec:manyD:prelim}

Consider an open bounded region $\Omega\subset\bbR^2$, the atomistic and the continuum regions $\Omega_\a\subset\Omega$, $\Omega_\c=\Omega\setminus\overline{\Omega_\a}$ (both open), and the A/C interface $\Gamma=\partial\Omega_\c$.
We also consider the atomistic lattice $\calI = \Omega\cap\bbZ^2$, the atoms within the atomistic region $\calI_\a = \Omega_\a\cap\bbZ^2$, and the set of atoms involved in posing the Dirichlet-type boundary conditions $\calI_\D\subset\calI\cap\overline{\Omega_\a}$.
To avoid boundary effects, we will make additional assumptions on $\calI$ and $\calI_\D$ after we introduce atomistic interaction (in section \ref{sec:manyD:atomistic_continuum_model}).

We further assume that $\Omega_\c$ is a polygon with vertices coinciding with some lattice sites $x_i$.
We will consider the fully discrete case; i.e., we introduce a triangulation $\calT$ of $\Omega_\c$ with triangles $T\in\calT$ whose vertices are also positioned at the lattice sites $x_i$.
The space of continuous piecewise affine finite elements on $\Omega_\c$ is denoted as $\calP_1(\calT)$.
This setting is exactly the same as the one of the QCE method \cite{TadmorOrtizPhillips1996}.
However, the difference between the QCE and the proposed methods will be in the way the atomistic and continuum energies are coupled (cf.\ section \ref{sec:manyD:QCE_and_implementation} for more details on relation between ECC and QCE).

To formulate the proposed coupling in 2D, we introduce the following supplementary notations.
For $u:\calI\to\bbR$, $i,i+r\in\calI$, define the discrete differentiation
\[
(\Da{r} y)_i = \Da{r} y_i = y_{i+r}-y_i.
\]
Define a bond $(x_1, x_2)$ between two points $x_1, x_2\in\bbZ^2$ as an interval
\[
(x_1, x_2) := \{(1-\lambda) x_1 + \lambda x_2:\, \lambda\in(0,1)\}.
\]
Define the averaging over a bond $b = (x_1, x_2)$ of a piecewise continuous function $f:b\to\bbR$ as
\[
\mint_{(x_1, x_2)} f(x) \db := \int\limits_{0}^1 f((1-\lambda) x_1 + \lambda x_2) d\lambda.
\]
The following property then holds:
\[
\mint_{(i, i+r)} \Dc{r} f \db = \Da{r} f_i
\quad\text{for any }f\in\calP_1(\calT)
,
\]
where the directional derivative is defined as
\begin{equation}
\label{eq:Dc-definition}
\Dc{r} f(x) := \lim_{\varepsilon\to 0} (f(x+\varepsilon r)-f(x)).
\end{equation}
If $f$ is smooth, then $\Dc{r} f = r\cdot \nabla f$.
It should be noted that $\nabla f$ of a function $f\in\calP_1(\calT)$ may be undefined on a bond which fully or partly lies on an edge of some $T\in\calT$.
Nevertheless, the directional derivative $\Dc{r} f$ is piecewise continuous (more precisely, piecewise constant) on any bond $b=(i,i+r)\subset\Omega_\c$ with the same direction vector $r$.

For vector-valued functions $v$ the bond averages and the directional derivatives are defined componentwise in the same manner.

\subsection{Atomistic model and continuum approximation}\label{sec:manyD:atomistic_continuum_model}

We assume that atomistic interaction is given by a set of neighbors $\calR\subset\bbZ^2\setminus\{0\}$ and a two-body potential $\varphi$.
For simplicity of notations we denote
\begin{equation}
\label{eq:manyD:phi_through_varphi}
\phi(z) := \varphi(|z|)
\quad\text{for $z\in\bbR^2$}.
\end{equation}
Denote the collection of all bonds in the system by
\[
\calB := \{(i,i+r):\, r\in\calR,\ i\in\calI,\ i+r\in\calI\}
\]
and the {\itshape exact contribution} of a bond $b=(i,i+r)\in\calB$ under the deformation $y$ as
\begin{equation} \label{eq:manyD:exacte-definition}
e_{b}(y)
:= \phi(\Da{r} y_i).
\end{equation}
It should be understood that in this formula $r$ and $i$ essentially depend on $b$, but to simplify the notations we will avoid writing $r_b$ or $i_b$.

The energy of the atomistic model then reads
\begin{equation} \label{eq:manyD:E-definition} 
E(y) = \sum\limits_{b\in\calB} e_{b}(y),
\end{equation}
and its continuum approximation in $\Omega_\c$ based on the Cauchy--Born rule is
\begin{align}
\label{eq:manyD:Ec-definition} 
E_{\c}(y) =~&
\int\limits_{\Omega_\c} \sum\limits_{r\in\calR} \phi(\Dc{r} y) \d\Omega
\\ =~& \label{eq:manyD:Ec-definition-triangles}
\sum\limits_{T\in\calT}
\sum\limits_{r\in\calR}
	|T|\, \phi(\Dc{r} y|_T)
\quad\text{(for $y\in\calP_1(\calT)$)}
.
\end{align}

To avoid boundary effects entering our analysis, similarly to the assumption \eqref{eq:Dirichlet-conditions-restriction}, we assume
\begin{align} \label{eq:manyD:Dirichlet-conditions-restriction-1}
 \displaystyle
(\forall i\in \calI\setminus\calI_\D) ~
(\forall r\in \calR) ~~
& i+r\in\calI
\qquad \textnormal{and}
\\ \displaystyle
\label{eq:manyD:Dirichlet-conditions-restriction-2}
(\forall r\in\calR)~
\big(\forall i\in\bbZ^2:\, (i,i+r)\cap\overline{\Omega_\c}\ne\emptyset\big)~~
& i,i+r\in \calI.
\end{align}

\begin{figure}
\begin{center}
\includegraphics{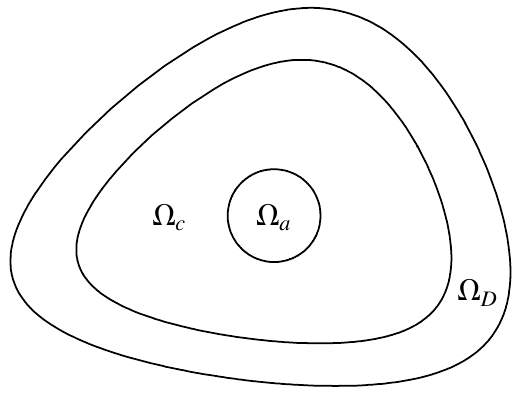}
\end{center}
\caption{An illustration of a possible choice of the regions $\Omega_\a$, $\Omega_\c$, and $\Omega_\D$. If $\Omega_\D$ is ``thick'' enough, then \eqref{eq:manyD:Dirichlet-conditions-restriction-1} and \eqref{eq:manyD:Dirichlet-conditions-restriction-2} are satisfied with $\calI_\D = \Omega_\D \cap \bbZ^2$.
}
\label{fig:regions}
\end{figure}

Practically, these assumptions mean that we have enough atoms $\calI_\D$ whose position we fix so that each free atom near the boundary of $\Omega$ (and hence each free atom in $\Omega_\c\subset\Omega$) has enough neighbors to interact with.
It can be achieved by choosing a region $\Omega_\D$ near the boundary of $\Omega$ such that $\Omega_\D\cap\Omega_\c=\emptyset$ and 
\[
{\rm dist}(\Omega\setminus\Omega_\D, \partial\Omega)>\max_{r\in\calR} |r|
\]
(see illustration on Figure \ref{fig:regions}).
Then $\calI_\D = \Omega_\D \cap \bbZ^2$ will satisfy \eqref{eq:manyD:Dirichlet-conditions-restriction-1} and \eqref{eq:manyD:Dirichlet-conditions-restriction-2}.

The spaces of deformations and displacements are then defined similarly to the one-dimensional case:
\begin{align*}
\U   &= \{y: \calI_\a\cup\overline{\Omega_\c}\to \bbR^2:~y|_{\Omega_\c}\in \calP_1(\calT)\}, \\
\U_0 &= \{u\in\U:~u_i=0\ \text{ for all } i\in\calI_\D) \}.
\end{align*}
With the assumptions \eqref{eq:manyD:Dirichlet-conditions-restriction-1} and \eqref{eq:manyD:Dirichlet-conditions-restriction-2}, one can now show that the uniform deformation $y=\mF x$ is an equilibrium of the atomistic energy \eqref{eq:manyD:E-definition}.
\begin{proposition}\label{prop:variation_of_E_is_zero}
\[
E'(\mF x; v) = 0
\quad\text{for all}~~\mF\in\bbR^{2\times 2},~v\in\calU_0
.
\]
\end{proposition}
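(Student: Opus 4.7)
The plan is to mimic the one-dimensional computation \eqref{eq:variation-of-E-is-zero} direction by direction. First, differentiating \eqref{eq:manyD:E-definition} via \eqref{eq:manyD:exacte-definition} and using $\Da{r}y_i=\mF r$ at $y=\mF x$, one obtains
\[
E'(\mF x;v)
= \sum_{(i,i+r)\in\calB}\nabla\phi(\mF r)\cdot(v_{i+r}-v_i)
= \sum_{r\in\calR}\nabla\phi(\mF r)\cdot\bigg(\sum_{i:\,(i,i+r)\in\calB}\Da{r} v_i\bigg).
\]
The task therefore reduces to the two-dimensional analog of \eqref{eq:integral-of-derivative-is-zero}: for every fixed $r\in\calR$ and every $v\in\calU_0$,
\[
\sum_{i:\,(i,i+r)\in\calB}\Da{r} v_i = 0.
\]

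Next, I would decompose $\bbZ^2$ into the family of $r$-lines $L_s:=s+\bbZ r$ (finitely many of which meet the bounded set $\calI$) and handle each line independently. On such a line, $\calI\cap L_s$ is a finite disjoint union of maximal $r$-strings $\{i_1^{(m)},\,i_1^{(m)}+r,\,\ldots,\,i_1^{(m)}+(N_m-1)r\}$, and the bonds $(i,i+r)\in\calB$ with $i\in L_s$ are precisely the $N_m-1$ consecutive bonds within each such string. Summing $\Da{r} v_i$ along one string telescopes to the endpoint difference $v_{i_1^{(m)}+(N_m-1)r}-v_{i_1^{(m)}}$.

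The main obstacle is to prove that both endpoints of every maximal $r$-string lie in $\calI_\D$. For the ``last'' atom $i_{\rm last}:=i_1^{(m)}+(N_m-1)r$, maximality gives $i_{\rm last}+r\notin\calI$, so if $i_{\rm last}$ were a free atom, \eqref{eq:manyD:Dirichlet-conditions-restriction-1} applied with $r\in\calR$ would force $i_{\rm last}+r\in\calI$, a contradiction. For the ``first'' atom $i_1^{(m)}$ one needs the analogous statement with $-r$; this is natural because the pair potential $\phi(z)=\varphi(|z|)$ is even, so $\calR$ may be taken symmetric under $r\mapsto -r$ (equivalently, \eqref{eq:manyD:Dirichlet-conditions-restriction-1} should be read as applying to $\pm\calR$). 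Granted this, $v\in\calU_0$ makes every endpoint value vanish, each line sum telescopes to zero, and summing over the finitely many intersecting lines produces the desired identity---whence Proposition \ref{prop:variation_of_E_is_zero} follows.
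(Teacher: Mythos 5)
Your proposal is correct and follows essentially the same route as the paper: differentiate bond-by-bond, factor out $\phi'(\mF r)$ for each direction $r\in\calR$, and reduce the claim to $\sum_{i,i+r\in\calI}\Da{r}v_i=0$, which the paper asserts directly from \eqref{eq:manyD:Dirichlet-conditions-restriction-1} and which you establish in detail by telescoping along maximal $r$-strings. Your observation that the \emph{first} endpoint of a string requires \eqref{eq:manyD:Dirichlet-conditions-restriction-1} with $-r$ as well --- i.e., that $\calR$ should be taken symmetric under $r\mapsto-r$, which is harmless for the even potential $\phi(z)=\varphi(|z|)$ and is automatic for the construction via $\Omega_\D$ --- is a legitimate clarification of a point the paper leaves implicit.
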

\begin{proof}
Compute
\begin{equation}
e_{(i,i+r)}'(\mF x; v) = \phi'(\mF r) \cdot \Da{r} v_i
,
\label{eq:exact-contribution-variation-manyD}
\end{equation}
where, due to \eqref{eq:manyD:phi_through_varphi}, $\phi'(z) = \varphi'(|z|) \frac{z}{|z|}$ for $z\in\bbR^2$.
Then notice that for $v\in\U_0$ and $r\in\calR$
\[
\sum\limits_{i,i+r\in\calI} \Da{r} v_i = 0
\]
due to the assumption \eqref{eq:manyD:Dirichlet-conditions-restriction-1}.
Finally, compute
\[
E'(\mF x; v)
= \sum\limits_{(i,i+r)\in\calB} \phi'(\mF r) \cdot \Da{r} v_i
= \sum\limits_{r\in\calR} \phi'(\mF r) \cdot \sum\limits_{i,i+r\in\calI} \Da{r} v_i
= 0.
\]
\end{proof}
In the next subsection we propose a two-dimensional extension of the ECC method \eqref{eq:ecc-def} and then show that it is consistent, i.e., that its variation on the uniform deformation $y=\mF x$ is zero.

\subsection{Formulation of ECC}\label{sec:manyD:ecc_formulation}

By analogy with the one-dimensional case, define the {\itshape continuum contribution} of a bond $b\in\calB$ fully contained in $\Omega_\c$ as
\begin{equation} \label{eq:manyD:ec-predefinition}
\ec_{b}(y) := \mint_{b}
	\phi(\Dc{r} y)
	\db
\quad\text{(for $b\subset\Omega_\c$)}
,
\end{equation}
and hence define the ECC method in 2D:
\begin{equation}
\label{eq:manyD:ECC-def}
\Eecc(y) := 
\sum\limits_{\substack{b\in\calB\\ b\not\subset \Omega_\c}} e_{b}(y)
+
\sum\limits_{\substack{b\in\calB\\ b\subset \Omega_\c}} \ec_{b}(y).
\end{equation}
Note that $\ec_{b}(y)$ is well defined since $\phi(\Dc{r} y)$ is piecewise constant along each $b\subset\Omega_\c$.

\begin{proposition}\label{prop:variation_of_Eecc_is_zero}
The A/C coupling \eqref{eq:manyD:ECC-def} is consistent; i.e.,
\[
(\Eecc)'(\mF x; v) = 0
\quad\text{for all}~~\mF\in\bbR^{2\times 2},\text{~and~}v\in\calU_0
.
\]
\end{proposition}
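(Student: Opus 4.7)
The plan is to imitate the 1D argument, whose backbone was Proposition \ref{prop:e_c_variations} (equality of variations of the exact and continuum contributions at a uniform deformation) together with the identity $E'(Fx; v)=0$. Here the analogue of the latter is already in hand as Proposition \ref{prop:variation_of_E_is_zero}, so the only new work is to establish a 2D version of Proposition \ref{prop:e_c_variations}, namely that for every bond $b=(i,i+r)\in\calB$ contained in $\Omega_\c$ one has
\[
\ec_{b}'(\mF x; v) = e_{b}'(\mF x; v).
\]

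To prove this, I would differentiate under the bond average in the definition \eqref{eq:manyD:ec-predefinition}:
\[
\ec_{b}'(\mF x; v) = \mint_{b} \phi'\bigl(\Dc{r}(\mF x)\bigr) \cdot \Dc{r} v \,\db.
\]
Since $\Dc{r}(\mF x) = \mF r$ is constant on $b$, the vector $\phi'(\mF r)$ pulls outside the average, yielding $\phi'(\mF r)\cdot \mint_b \Dc{r} v\,\db$. Now invoke the averaging identity stated in section \ref{sec:manyD:prelim}: for $v\in\calP_1(\calT)$,
\[
\mint_{(i,i+r)} \Dc{r} v\,\db = \Da{r} v_i.
\]
This gives $\ec_{b}'(\mF x; v) = \phi'(\mF r)\cdot \Da{r} v_i$, which by \eqref{eq:exact-contribution-variation-manyD} is exactly $e_{b}'(\mF x; v)$.

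With this bond-wise identity in place, the assembly is immediate:
\[
(\Eecc)'(\mF x; v) = \sum_{\substack{b\in\calB\\ b\not\subset\Omega_\c}} e_{b}'(\mF x; v) + \sum_{\substack{b\in\calB\\ b\subset\Omega_\c}} \ec_{b}'(\mF x; v) = \sum_{b\in\calB} e_{b}'(\mF x; v) = E'(\mF x; v) = 0,
\]
where the last equality is Proposition \ref{prop:variation_of_E_is_zero}.

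The one point that deserves care is the legitimacy of the averaging identity when $b$ crosses or lies along edges of the triangulation, where $\nabla v$ can fail to exist. This is handled by the observation already recorded at the end of section \ref{sec:manyD:prelim}: the directional derivative $\Dc{r} v$ defined via \eqref{eq:Dc-definition} is piecewise constant along any bond in direction $r$, so its bond average is well defined and, by a direct telescoping over the sub-intervals of $b$ lying in individual triangles, telescopes to $v_{i+r}-v_i = \Da{r}v_i$. This is the only nontrivial step; the rest of the proof is pure bookkeeping.
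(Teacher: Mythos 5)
Your proof is correct and follows essentially the same route as the paper: the paper's proof likewise establishes the bond-wise identity $\ec_{b}'(\mF x; v) = \phi'(\mF r)\cdot \mint_{b} \Dc{r} v \,\db = \phi'(\mF r)\cdot \Da{r} v_i = e_{b}'(\mF x; v)$ and then sums over bonds, invoking Proposition \ref{prop:variation_of_E_is_zero}. Your closing remark on the well-definedness of the bond average across triangle edges is exactly the point the paper disposes of in the preliminaries of section \ref{sec:manyD:prelim}.
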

\begin{proof}
Since
\begin{equation}
c_{(i,i+r)}'(\mF x; v) = \phi'(\mF r) \cdot \mint_{(i,i+r)} \Dc{r} v \,\db = \phi'(\mF r)\cdot \Da{r} v_i
= e_{(i,i+r)}'(\mF x; v)
,
\label{eq:continuum-contribution-variation-manyD}
\end{equation}
the variation of $\Eecc$ is zero:
\[
(\Eecc)'(\mF x; v)
=
\sum\limits_{\substack{b\in\calB\\ b\not\subset \Omega_\c}} e_{b}'(\mF x; v)
+
\sum\limits_{\substack{b\in\calB\\ b\subset \Omega_\c}} e_{b}'(\mF x; v)
=
E'(\mF x; v)
= 0,
\]
where $E'(\mF x; v) = 0$ is due to Proposition \ref{prop:variation_of_E_is_zero}.
\end{proof}

The expression \eqref{eq:manyD:ECC-def} indeed couples the energy of the system in the atomistic and the continuum region.
However, it still remains to be shown that \eqref{eq:manyD:ECC-def} can be efficiently computed.
The following theorem indeed shows this.
\begin{theorem}
\label{thm:2d:ecc_is_efficient}
The energy of the A/C coupling defined by \eqref{eq:manyD:ECC-def} can be expressed as
\begin{equation} \label{eq:manyD:Eecc}
\Eecc(y) = 
\sum\limits_{\substack{b\in\calB\\ b\not\subset \Omega_\c}} (e_{b}(y) - \ec_{b}(y))
+ E_\c(y),
\end{equation}
where the definition of $\ec_b(y)$ is extended to $b\not\subset \Omega_\c$ by \eqref{eq:manyD:ec-definition} and \eqref{eq:manyD:chi-def}
and $E_\c(y)$ is defined by \eqref{eq:manyD:Ec-definition-triangles}.
\end{theorem}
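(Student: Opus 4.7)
The plan is to mirror the proof of Proposition \ref{prop:1d:ecc_is_efficient}. Subtracting \eqref{eq:manyD:ECC-def} from \eqref{eq:manyD:Eecc} shows that the theorem is equivalent to the single identity
\[
\sum_{b\in\calB}\ec_b(y) = E_\c(y),
\]
after which the claimed formula follows by the bookkeeping $E_\c=\sum_{b\subset\Omega_\c}\ec_b+\sum_{b\not\subset\Omega_\c}\ec_b$ and direct substitution into \eqref{eq:manyD:ECC-def}, exactly as in the 1D argument around \eqref{eq:continuum-energy-as-a-sum-of-bonds}. So the content of the theorem lies entirely in this identity.

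To establish it, I would use the extended definition of $\ec_b$ supplied by \eqref{eq:manyD:ec-definition} and \eqref{eq:manyD:chi-def}, which (in analogy with \eqref{eq:continuum-contribution-definition}) expresses $\ec_b(y)$ as an integral of $\phi(\Dc{r}y)$ against a bond-localized weight $\chi_b$. Interchanging sum and integral and grouping bonds by their common direction $r\in\calR$ reduces the identity to a two-dimensional bond-density statement
\[
\sum_{b=(i,i+r)\in\calB}\chi_b(x) = 1 \quad\text{a.e. on }\Omega_\c, \qquad r\in\calR.
\]
The assumptions \eqref{eq:manyD:Dirichlet-conditions-restriction-1}--\eqref{eq:manyD:Dirichlet-conditions-restriction-2} are used to replace $\sum_{i\in\bbZ^2}$ by $\sum_{(i,i+r)\in\calB}$ without losing any bond intersecting $\overline{\Omega_\c}$, playing the role of the passage from \eqref{eq:1d:bond_density} to \eqref{eq:bond_sum_all}.

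The main obstacle is the 2D bond-density identity itself. In 1D, bonds of length $r$ overlap with uniform multiplicity $r$ and Lemma \ref{lem:1d:bond_density} is essentially a tautology. In 2D, bonds in a fixed direction $r$ are parallel 1D segments through lattice sites that cover only a measure-zero subset of $\Omega_\c$, so the weights $\chi_b$ must be built as 2D objects---densities or characteristic functions of suitably chosen planar regions---whose $\bbZ^2$-translates partition $\Omega_\c$ modulo null sets while still being compatible with the bond-average formula \eqref{eq:manyD:ec-predefinition} for bonds $b\subset\Omega_\c$ evaluated on any $y\in\calP_1(\calT)$. This geometric construction, valid for an arbitrary triangulation $\calT$ whose nodes coincide with lattice sites, is where the real work of the theorem lies; once it is in place, the substitution and rearrangement steps are immediate and the proof is complete.
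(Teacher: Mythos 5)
Your reduction of the theorem to the single identity $\sum_{b\in\calB}\ec_b(y)=E_\c(y)$ is exactly right, and it matches the paper: once that identity is in hand, the proof is the one-line bookkeeping you describe. The use of \eqref{eq:manyD:Dirichlet-conditions-restriction-2} to pass from a sum over all $i\in\bbZ^2$ to a sum over $\calB$ is also the same as in the paper. The problem is that you then defer the entire technical content --- the two-dimensional bond-density statement --- to a ``geometric construction'' that you describe but do not carry out, and the construction you sketch is not the one that works. You propose to attach to each bond $b=(i,i+r)$ a planar region whose $\bbZ^2$-translates tile $\Omega_\c$ and which is ``compatible with the bond-average formula \eqref{eq:manyD:ec-predefinition} \ldots for any $y\in\calP_1(\calT)$.'' That compatibility is precisely what fails for an arbitrary triangulation: the average of the piecewise constant function $\phi(\Dc{r}y)$ along a one-dimensional segment will in general not equal its area average over any fixed two-dimensional tile, because the segment and the tile meet the triangles of $\calT$ in different proportions. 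So as stated, your route does not close.

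The paper avoids this by never assigning a 2D region to a bond. It defines the extended contribution as $\ec_b(y)=\mint_b\cchi_{\Omega_\c}\,\phi(\Dc{r}y)\,\db$, where $\cchi_{\Omega_\c}$ is the (pointwise, density-valued) characteristic function \eqref{eq:manyD:chi-def} of the \emph{polygon}, and proves the averaged, per-triangle identity
\begin{equation*}
\sum_{i\in\bbZ^2}\ \mint_{(i,i+r)}\cchi_T(x)\,\db \;=\; |T|
\end{equation*}
for every lattice triangle $T$ (Lemma \ref{lem:area-of-triangle}). Since $\Dc{r}y$ is constant on each $T\in\calT$, the energy density factors out of both the bond sum and the area integral triangle by triangle, and summing over $\calT$ via $\cchi_{\Omega_\c}=\sum_T\cchi_T$ gives $\sum_b\ec_b(y)=E_\c(y)$ with no tiling construction needed. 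The per-triangle identity itself is proved by a self-similarity argument: partition $mT$ into $m^2$ translated/reflected copies of $T$, observe that the left-hand side for $mT$ equals the number of bonds contained in $mT$ up to an $\calO(m)$ boundary error, i.e.\ $|mT|+\calO(m)$, divide by $m^2$ and let $m\to\infty$. This counting argument is the real work of the theorem; without it (or an equivalent), your proposal establishes only the easy rearrangement and leaves the decisive step unproven.
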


Theorem \ref{thm:2d:ecc_is_efficient} implies that the complexity of computing $\Eecc(y)$ scales as the number of bonds $b\in\calB$ such that $b\not\subset \Omega_\c$ (which is, up to a constant factor, the same as the number of atoms in $\Omega_\a$) plus the number of triangles in $\calT$.
Remarkably, the formula \eqref{eq:manyD:Eecc} is exactly the same as for the one-dimensional model \eqref{eq:1d:ecc_is_efficient} (although the respective objects are now in $\bbR^2$).

The proof of Theorem \ref{thm:2d:ecc_is_efficient} will be given in the next subsection. 

\subsection{Two-dimensional bond-density lemma}

The proof of Theorem \ref{thm:2d:ecc_is_efficient} follows the line of the proof of Proposition \ref{prop:1d:ecc_is_efficient}:
We first find a two-dimensional analogue of the bond-density lemma (Lemma \ref{lem:1d:bond_density}) and its corollary \eqref{eq:bond_sum_all}.
Based on them, we extend the definition of $\ec_b(y)$ to the bonds $b\not\subset\Omega_\c$ in a way that \eqref{eq:manyD:Eecc} holds.

A significant difficulty in extending the one-dimensional results to the two-dimensional case is that the bonds are essentially one-dimensional objects; therefore the sum of their characteristic functions equals zero almost everywhere in $\Omega_\c$.
This means that we have to look for weaker analogues of Lemma \ref{lem:1d:bond_density}.

To this end, we define the characteristic function for a polygonal set $\omega\subset\bbR^2$ in the following way:
\begin{equation}
\label{eq:manyD:chi-def}
\cchi_{\omega}(x) = \lim\limits_{\rho\to0} \frac{|\omega\cap B_\rho(x)|}{|B_\rho(x)|},
\end{equation}
where $B_\rho(x)$ is the ball with radius $\rho$ and center $x$ and $|\bullet|$ is the measure of the set.
We note that (i) the limit w.r.t.\ $\rho\to 0$ in the definition of $\cchi_{\omega}(x)$ exists, and (ii) including/excluding the boundary of a polygon $\omega$ (or any part of it) does not change the point values of $\cchi_{\omega}(x)$.

The characteristic function $\cchi_{T}(x)$ for a triangle $T$ can be visualized as follows:
\begin{equation}
\cchi_{T}(x) = \left\{
\begin{array}{lll}
1           & &\textnormal{if $x\in$ interior of $T$},
\\
\frac{1}{2} & &\textnormal{if $x\in$ edge of $T$},
\\
\frac{\alpha}{2\pi} & &\textnormal{if $x$ is a vertex of $T$ with angle $\alpha$},
\\
0 & & \textnormal{otherwise}.
\end{array}
\right.
\label{eq:chi-2d}
\end{equation}
Note that for the formulation of the method the values of $\cchi_{\omega}(x)$ at the vertices of $\omega$ will not be important.
For the characteristic function $\cchi_{\omega}(x)$ thus defined, we have
\begin{equation} \label{eq:Omegac-partition}
\cchi_{\Omega_\c}(x) = \sum\limits_{T\in\calT} \cchi_T(x)
\quad\text{for all}~~x\in\bbR^2
,
\end{equation}
where the identity is strictly pointwise (i.e., not just almost everywhere).

Using this characteristic function, it is sufficient to obtain the bond-density lemma for a single triangle $T$.
\begin{lemma}[bond-density lemma]\label{lem:area-of-triangle}
Let $T$ be a triangle in $\bbR^2$ whose vertices belong to the lattice $\bbZ^2$.
Then for any $r\in\bbZ^2$, $r\ne 0$, the following identity holds:
\begin{equation}
\sum\limits_{i\in\bbZ^2} \mint_{(i,i+r)}\cchi_T(x) \db = |T|
.
\label{eq:2d-triangle-area-through-bond-sum}
\end{equation}
\end{lemma}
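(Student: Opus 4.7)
The plan is to reduce~\eqref{eq:2d-triangle-area-through-bond-sum} to a one-dimensional sum-versus-integral identity for the cross sections of $T$, which will then follow from exactness of the composite trapezoidal rule on piecewise linear integrands whose breakpoints lie at the integers.

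First I would perform a change of basis adapted to $r$. Writing $r=g\,s$ with $g>0$ the greatest common divisor of the components of $r$ and $s\in\bbZ^2$ primitive, I pick $e\in\bbZ^2$ so that $\{s,e\}$ is a basis of $\bbZ^2$. The associated linear map $(t,b)\mapsto ts+be$ has determinant $\pm 1$, bijects $\bbZ^2$ onto itself, sends bonds to bonds, and preserves area, so after pulling $T$ back through it I may assume $r=(g,0)$ with $g\in\bbZ^+$ and $T$ still has vertices in $\bbZ^2$. Then I rewrite $\mint_{(i,i+r)}\cchi_T\,\db=\int_0^1\cchi_T(i_1+\lambda g,\,i_2)\,d\lambda$ and exchange the sum over $i_1\in\bbZ$ with the integration over $\lambda\in[0,1]$; since the map $(i_1,\lambda)\mapsto i_1+\lambda g$ covers $\bbR$ with multiplicity $g$, this combined summation-plus-integration telescopes into a single integral $\int_{\bbR} dt$, yielding
\[
\sum_{i\in\bbZ^2}\mint_{(i,i+r)}\cchi_T\,\db
\;=\; \sum_{i_2\in\bbZ}\int_{\bbR}\cchi_T(t,i_2)\,dt.
\]

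Next I would relate both sides to the horizontal width $w(s):=|\{t\in\bbR:(t,s)\in T\}|$. This $w$ is continuous, piecewise linear with breakpoints at the $b$-coordinates of the vertices of $T$ (all integers), and supported on some integer interval $[s_{\min},s_{\max}]$. On the one hand, Fubini gives $\int_{\bbR}w(s)\,ds=|T|$, and the composite trapezoidal rule is exact on such $w$:
\[
\int_{\bbR} w(s)\,ds
\;=\; \tfrac12 w(s_{\min}) + \sum_{s_{\min}<k<s_{\max}} w(k) + \tfrac12 w(s_{\max}).
\]
On the other hand, for integer $i_2$ the inner integral $\int_{\bbR}\cchi_T(t,i_2)\,dt$ equals $w(i_2)$ unless the horizontal line $\{y_2=i_2\}$ contains an edge of $T$, in which case it equals $\tfrac12 w(i_2)$ because $\cchi_T=\tfrac12$ on edges.

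The main subtlety, where essentially all of the bookkeeping will live, is matching the half-weights coming from $\cchi_T$ with the endpoint half-weights produced by the trapezoidal rule. Here I would use that $T$ is a triangle: it has at most one edge parallel to any given direction, and such an edge must lie at an extremum of the $b$-coordinate over~$T$. Consequently, the horizontal line $\{y_2=i_2\}$ can contain an edge of $T$ only at $i_2=s_{\min}$ or $i_2=s_{\max}$, and moreover $w(s_{\min})$ (resp.\ $w(s_{\max})$) vanishes precisely when no such edge occurs at that level. The half-weights therefore appear in both sums at exactly the same locations, and the identity $\sum_{i_2}\int_{\bbR}\cchi_T(t,i_2)\,dt=\int_{\bbR}w(s)\,ds=|T|$ follows.
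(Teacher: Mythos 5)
Your proof is correct, but it takes a genuinely different route from the paper's. The paper argues by self-similarity: it observes that both sides of \eqref{eq:2d-triangle-area-through-bond-sum} are additive over disjoint triangles and invariant under integer translations and point reflections about lattice points, tiles the dilated triangle $mT$ by $m^2$ congruent copies of $T$, identifies the left-hand side for $mT$ with the number of lattice bonds contained in $mT$ up to an $\calO(m)$ boundary error, and lets $m\to\infty$. Your argument instead reduces everything to one dimension: the unimodular change of basis sends $r$ to $(g,0)$, the sum over $i_1$ combined with the bond average telescopes exactly into $\int_\bbR\cchi_T(t,i_2)\,\d t$ (the covering multiplicity $g$ cancelling the $1/g$ Jacobian), and the remaining sum over $i_2$ matches $\int_\bbR w(s)\,\d s=|T|$ by exactness of the trapezoidal rule on the piecewise affine cross-sectional width $w$, with the $\cchi_T=\smfrac12$ values on an edge parallel to $r$ landing precisely where the trapezoidal endpoint half-weights do. Each approach buys something: the paper's is short and geometric but leaves its $\calO(m)$ error terms asserted rather than estimated, whereas yours is exact at every step, requires no limiting argument, and makes the mechanism transparent---the 2D lemma is the 1D bond-density identity applied fiberwise---which also explains the failure in 3D noted in Remark \ref{rem:no3D}, since cross-sectional areas of a tetrahedron are piecewise \emph{quadratic} in height and the trapezoidal rule is then no longer exact. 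Two cosmetic points you should tidy up: (i) $w$ is continuous only on $[s_{\min},s_{\max}]$, not on all of $\bbR$ (it jumps at an endpoint exactly when $T$ has an edge at that level), which is harmless since your trapezoidal identity is applied on $[s_{\min},s_{\max}]$ and your endpoint bookkeeping already covers both cases; and (ii) since $\cchi_T$ is defined in \eqref{eq:manyD:chi-def} via balls, which a linear map does not preserve, you should note that $\cchi_{AT}(Ax)=\cchi_T(x)$ at interior, edge, and exterior points (the only values that matter, as vertices contribute a null set to each bond average).
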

\begin{proof}
Notice that if \eqref{eq:2d-triangle-area-through-bond-sum} is valid for triangles $T_1$ and $T_2$ ($T_1\cap T_2=\emptyset$), then it is valid for $T_1\cup T_2$.
Also notice that both sides of \eqref{eq:2d-triangle-area-through-bond-sum} are invariant w.r.t.\ translation of $T$ by a vector $s\in\bbZ^2$ and reflection around a point $p\in\bbZ^2$.

Using these transformations, we can obtain $mT$ (a copy of $T$ stretched by a factor $m\in\bbZ$) as a union of $m^2$ copies of $T$ shifted by different vectors and possibly reflected.
This statement can be proved by induction: for $m=1$ the statement is trivial.
Assuming that we have a partition of $mT$ by $m^2$ copies of $T$, we can complete it to a partition of $(m+1)T$ by placing $m$ parallelograms (each of which consists of two triangles) next to a side of $mT$ and one additional triangle at the corner of that side (see Figure \ref{fig:triangle-replication} for illustration).
Thus, we can partition $(m+1)T$ by $m^2 + 2m+1 = (m+1)^2$ triangles.

\begin{figure}
\begin{center}
\includegraphics{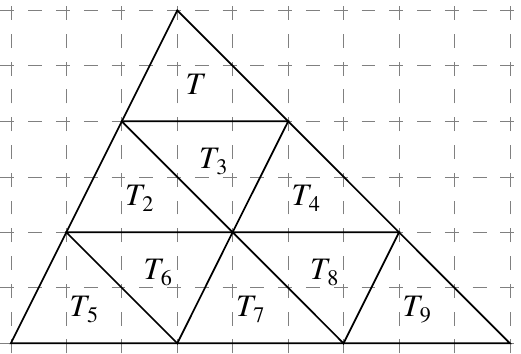}
\end{center}
\caption{An illustration of the induction step of a proof that the triangle $3T$ can be partitioned with nine copies of $T$.
	We have a partition of $2T$ with triangles $T$, $T_2$, $T_3$, $T_4$, and we complete it to a partition of $3T$ by adding two parallelograms ($T_5\cup T_6$ and $T_7\cup T_8$) and one corner triangle $T_9$.
}
\label{fig:triangle-replication}
\end{figure}

Hence one has
\begin{align*}
\sum\limits_{i\in\bbZ^2} \mint_{(i,i+r)}\cchi_T(x) \db
=~&
\frac{1}{m^{2}} \sum\limits_{i\in\bbZ^2} \mint_{(i,i+r)}\cchi_{mT}(x) \db
\displaybreak[0] \\ =~&
\frac{1}{m^{2}} \bigg(\sum\limits_{\substack{i\in\bbZ^2\\ (i,i+r)\subset mT}} ~\mint_{(i,i+r)}\cchi_{mT}(x) \db
+ \calO(m)\bigg)
\displaybreak[0] \\ =~&
\frac{1}{m^{2}} \bigg(\sum\limits_{\substack{i\in\bbZ^2 \\ (i,i+r)\subset mT}} 1
+ \calO(m)\bigg)
\displaybreak[0] \\ =~&
\frac{1}{m^{2}} \bigg(|mT|
+ \calO(m)\bigg)
=
|T|
+ \calO(m^{-1}).
\end{align*}
Here the $\calO(m)$ terms appear first from neglecting contributions of the bonds near the boundary of $mT$ (whose total measure scales as $\calO(m)$) and then from estimating the number of points $i$ for which $(i,i+r)\subset mT$ by $|mT|$, again making the error proportional to the measure of the boundary of $mT$.
Letting $m\to\infty$ finally proves \eqref{eq:2d-triangle-area-through-bond-sum}.
\end{proof}

\begin{remark}\label{rem:no3D}
Unfortunately, the three-dimensional analogue of Lemma \ref{lem:area-of-triangle} is not true.
One can take a bond direction $r=(1,0,0)$ and consider a tetrahedron $T$ with vertices $(1,0,0)$, $(0,1,0)$, $(0,0,1)$, $(0,1,1)$.
Then for each bond $(i,i+r)$ the left-hand side of \eqref{eq:2d-triangle-area-through-bond-sum} will be zero, although the right-hand side of \eqref{eq:2d-triangle-area-through-bond-sum} is $1/6$.
\end{remark}

Our next step in deriving an efficient representation of $\Eecc(y)$ consists in formulating a weak two-dimensional analogue of the identity \eqref{eq:bond_sum_all}.
\begin{lemma}\label{lem:partition-of-unity-manyD-weak}
Let $r\in\bbZ^2$, $r\ne 0$, $y\in \calP_1(\calT)$, and let $\phi$ be a continuous function $\bbR^2\to\bbR$.
Then the following identity holds:
\begin{equation}
\sum\limits_{i,i+r\in\bbZ^2} \mint_{(i,i+r)}\cchi_{\Omega_\c}\,\phi(\Dc{r} y) \db =
\int\limits_{\Omega_\c} \phi(\Dc{r} y) \d\Omega.
\label{eq:partition-of-unity-manyD-weak}
\end{equation}
\end{lemma}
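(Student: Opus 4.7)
The plan is to reduce the two-dimensional identity to the triangle-by-triangle bond-density identity already established in Lemma \ref{lem:area-of-triangle}. The key observation is that since $y \in \calP_1(\calT)$, the directional derivative $\Dc{r} y$ is constant on each open triangle $T \in \calT$; call this constant vector $g_T := \Dc{r} y|_T$. Then the right-hand side of \eqref{eq:partition-of-unity-manyD-weak} reduces immediately, via the pointwise partition \eqref{eq:Omegac-partition}, to
\[
\int_{\Omega_\c} \phi(\Dc{r} y) \,\d\Omega = \sum_{T\in\calT} |T|\,\phi(g_T).
\]

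Next I would rewrite the bond integrands on the left-hand side using the same partition \eqref{eq:Omegac-partition}: along each bond $b=(i,i+r)$, substitute $\cchi_{\Omega_\c}(x) = \sum_{T\in\calT} \cchi_T(x)$ and replace $\phi(\Dc{r} y(x))$ by $\phi(g_T)$ on the portion of $b$ lying in $T$. Since $\Omega_\c$ is bounded and $\calT$ is a finite triangulation, both the sum over $i$ and the sum over $T$ contain only finitely many nonzero terms, so interchanging them is legitimate:
\[
\sum_{i,i+r\in\bbZ^2} \mint_{(i,i+r)} \cchi_{\Omega_\c}\,\phi(\Dc{r} y)\,\db
=
\sum_{T\in\calT} \phi(g_T) \sum_{i\in\bbZ^2} \mint_{(i,i+r)} \cchi_T(x)\,\db.
\]
Applying Lemma \ref{lem:area-of-triangle} to each inner sum yields $\sum_{i} \mint_{(i,i+r)} \cchi_T\,\db = |T|$, which produces $\sum_T |T|\,\phi(g_T)$ and hence matches the right-hand side.

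The only subtlety I anticipate is justifying the pointwise replacement $\cchi_{\Omega_\c}(x)\,\phi(\Dc{r} y(x)) = \sum_T \cchi_T(x)\,\phi(g_T)$ on each bond, since $\Dc{r} y$ is potentially multivalued on triangulation edges. A bond $b$ meets the edge skeleton of $\calT$ either in finitely many isolated points (if the crossing edge is not parallel to $r$), contributing measure zero to the one-dimensional integral $\mint_b$, or along a subinterval of an edge parallel to $r$; in the latter case the continuity of $y$ across that edge forces $\Dc{r} y$ to take the same value on both adjacent triangles, so the replacement is consistent. Once this is in place, the argument reduces to finite sum manipulation and a direct application of Lemma \ref{lem:area-of-triangle}.
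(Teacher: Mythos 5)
Your proposal is correct and follows essentially the same route as the paper's proof: expand $\cchi_{\Omega_\c}$ via the pointwise partition \eqref{eq:Omegac-partition}, use that $\Dc{r}y$ is piecewise constant, interchange the (finite) sums, and apply Lemma \ref{lem:area-of-triangle} triangle by triangle. The edge subtlety you flag is real but is already disposed of in the paper's preliminaries (section \ref{sec:manyD:prelim}), where it is noted that $\Dc{r}f$ is piecewise constant along any bond with direction $r$ even when $\nabla f$ is undefined there.
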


\begin{proof}
The proof is based on Lemma \ref{lem:area-of-triangle} and identity \eqref{eq:Omegac-partition}.
Notice that $\Dc{r} y$ is constant in each triangle.
The following formal computation then proves \eqref{eq:partition-of-unity-manyD-weak}:
\begin{displaymath}
\begin{array}{r@{}l} \displaystyle
\sum\limits_{i,i+r\in\bbZ^2} \mint_{(i,i+r)}\cchi_{\Omega_\c}\,\phi(\Dc{r} y) \db
=~ & \displaystyle
\sum\limits_{i,i+r\in\bbZ^2} \mint_{(i,i+r)} \sum\limits_{T\in\calT} \cchi_{T}\,\phi(\Dc{r} y) \db
\\ =~ & \displaystyle
\sum\limits_{T\in\calT} \phi(\Dc{r} y)\Big|_T \bigg(\sum\limits_{i,i+r\in\bbZ^2} \mint_{(i,i+r)} \cchi_{T} \db\bigg)
\\ =~ & \displaystyle
\sum\limits_{T\in\calT} \phi(\Dc{r} y)\Big|_T \,|T|
=
\int\limits_{\Omega_\c} \phi(\Dc{r} y) \d\Omega.
\end{array}
\end{displaymath}
\end{proof}

As a final step toward deriving an efficient representation of $\Eecc(y)$, one can notice that due to the assumption \eqref{eq:manyD:Dirichlet-conditions-restriction-2}, the sum in the left-hand side of \eqref{eq:partition-of-unity-manyD-weak} can be changed to the sum over $(i,i+r)\in\calB$, which immediately yields the following corollary.

\begin{corollary}\label{corr:sum-of-bonds}
Let $r\in\calR$, $r\ne 0$, $y\in \calP_1(\calT)$, and let $f$ be a continuous function $\bbR^2\to\bbR$.
Then the following identity holds:
\begin{equation}
\sum\limits_{(i,i+r)\in\calB} \mint_{(i,i+r)}\cchi_{\Omega_\c}\,\phi(\Dc{r} y) \db =
\int\limits_{\Omega_\c} \phi(\Dc{r} y) \d\Omega.
\label{eq:sum-of-bonds}
\end{equation}
\end{corollary}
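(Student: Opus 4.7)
The plan is to derive this statement directly from Lemma~\ref{lem:partition-of-unity-manyD-weak} by showing that enlarging the index set from $\calB$ to all of $\{(i,i+r):\, i,i+r\in\bbZ^2\}$ cannot change the value of the sum. Concretely, I would argue that every term indexed by a pair $(i,i+r)$ with $i,i+r\in\bbZ^2$ but $(i,i+r)\notin\calB$ vanishes, so the two sums are equal.

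First, I would unpack what $(i,i+r)\notin\calB$ means: either $i\notin\calI$ or $i+r\notin\calI$. Taking the contrapositive of assumption \eqref{eq:manyD:Dirichlet-conditions-restriction-2}, any bond $(i,i+r)$ (with $r\in\calR$) whose endpoints are not both in $\calI$ must satisfy $(i,i+r)\cap\overline{\Omega_\c}=\emptyset$. In particular, the open bond $(i,i+r)$ lies entirely outside $\Omega_\c$, so $\cchi_{\Omega_\c}\equiv 0$ along the bond (using the definition \eqref{eq:manyD:chi-def}, since every point of the bond has a small ball disjoint from $\overline{\Omega_\c}$). Hence the integrand in the bond average is identically zero, and the corresponding summand vanishes.

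Consequently,
\[
\sum\limits_{i,i+r\in\bbZ^2} \mint_{(i,i+r)}\cchi_{\Omega_\c}\,\phi(\Dc{r} y)\,\db
=
\sum\limits_{(i,i+r)\in\calB} \mint_{(i,i+r)}\cchi_{\Omega_\c}\,\phi(\Dc{r} y)\,\db,
\]
and combining this equality with Lemma~\ref{lem:partition-of-unity-manyD-weak} gives the claimed identity. There is no real obstacle: the only point requiring care is extracting from \eqref{eq:manyD:Dirichlet-conditions-restriction-2} the statement that bonds outside $\calB$ do not touch $\overline{\Omega_\c}$, which is exactly its contrapositive.
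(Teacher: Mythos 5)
Your proof is correct and follows exactly the paper's route: the paper likewise deduces the corollary from Lemma~\ref{lem:partition-of-unity-manyD-weak} by observing that assumption \eqref{eq:manyD:Dirichlet-conditions-restriction-2} lets one restrict the sum to $(i,i+r)\in\calB$, since the omitted bonds miss $\overline{\Omega_\c}$ and so contribute zero. You merely spell out the contrapositive and the vanishing of $\cchi_{\Omega_\c}$ in more detail than the paper does, which is fine.
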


Having established the two-dimensional analogue of partition of unity (Lemma \ref{lem:partition-of-unity-manyD-weak} and Corollary \ref{corr:sum-of-bonds}), we can split the continuum energy into individual contributions.
For that, apply Corollary \ref{corr:sum-of-bonds} to the continuum energy \eqref{eq:manyD:Ec-definition}:
\begin{equation}
\label{eq:manyD:Ec-bondsum}
\begin{split}
E_\c(y)
=~&
\sum\limits_{r\in\calR}
	\int\limits_{\Omega_\c}
	\phi(\Dc{r} y)
	d\Omega
=
\sum\limits_{r\in\calR}
	\sum\limits_{(i,i+r)\in\calB}
	  \mint_{(i,i+r)}\cchi_{\Omega_\c}
		\phi(\Dc{r} y)
		\db
\\ =~&
\sum\limits_{b\in\calB}
	\mint_{b}\cchi_{\Omega_\c}
		\phi(\Dc{r} y)
		\db
=
\sum\limits_{b\in\calB}
	\ec_b(y)
,
\end{split}
\end{equation}
where we defined
\begin{equation} \label{eq:manyD:ec-definition}
\ec_{b}(y) := \mint_{b}\cchi_{\Omega_\c}
	\phi(\Dc{r} y)
	\db.
\end{equation}
\begin{proof}[Proof of Theorem \ref{thm:2d:ecc_is_efficient}]
Using \eqref{eq:manyD:ECC-def} and \eqref{eq:manyD:Ec-bondsum}, compute
\[
\Eecc(y)-E_\c(y)
=
\sum\limits_{\substack{b\in\calB\\ b\not\subset \Omega_\c}} e_{b}(y)
+
\sum\limits_{\substack{b\in\calB\\ b\subset \Omega_\c}} \ec_{b}(y)
-
\sum\limits_{b\in\calB} \ec_b(y)
=
\sum\limits_{\substack{b\in\calB\\ b\not\subset \Omega_\c}} e_{b}(y)
-
\sum\limits_{\substack{b\in\calB\\ b\not\subset \Omega_\c}} \ec_{b}(y)
,
\]
which proves \eqref{eq:manyD:Eecc}.
\end{proof}

\begin{remark}[on an alternative mesh]
As a possible modification of the above method, instead of requiring that the mesh nodes in the continuum region coincide with the lattice sites, one can require that the mesh nodes coincide with lattice half-sites (more precisely, the dual lattice sites), i.e., that the nodes of mesh triangles $T$ belong to the lattice $\bbZ^2+(1/2,1/2)$.
One can follow the proof of Lemma \ref{lem:area-of-triangle} and show that in this case it is also possible to construct a triangle $m T$ with $m^2$ copies of $T$ reflected around integer points and shifted by integer vectors.
\end{remark}

\subsection{On implementation and relation to QCE}\label{sec:manyD:QCE_and_implementation}

We can write the continuum energy of ECC using the bond-density lemma (Lemma \ref{lem:area-of-triangle}) as
\begin{equation}
\label{eq:manyD:ECC-eff-area}
\begin{split}
\Eecc_\c(y) := \sum\limits_{\substack{b\in\calB\\ b\subset \Omega_\c}} \ec_{b}(y)
=~&
\sum_{T\in\calT}
\sum\limits_{\substack{b\in\calB\\ b\subset \Omega_\c}}
\phi(\Dc{r} y|_T)
\mint_{(i,i+r)} \cchi_T \db
\\ =~&
\sum_{T\in\calT}
\sum_{r\in\calR}
\phi(\Dc{r} y|_T)
\bigg(\sum\limits_{\substack{(i,i+r)\in\calB\\ (i,i+r)\subset \Omega_\c}} \mint_{(i,i+r)} \cchi_T \db
\bigg)
\\ =~&
\sum_{T\in\calT}
\sum_{r\in\calR}
\phi(\Dc{r} y|_T)
\bigg(|T| - \sum\limits_{\substack{(i,i+r)\in\calB\\ (i,i+r)\not\subset \Omega_\c}} \mint_{(i,i+r)} \cchi_T \db\bigg)
\\ =~&
\sum_{T\in\calT}
\sum_{r\in\calR}
\Omega_{T,r} \phi(\Dc{r} y|_T),
\end{split}
\end{equation}
where
\[
\Omega_{T,r} := |T| - \sum\limits_{\substack{(i,i+r)\in\calB\\ (i,i+r)\not\subset \Omega_\c}} \mint_{(i,i+r)} \cchi_T \db
\]
is a bond-dependent effective area of $T$.
Notice that $\Omega_{T,r} = |T|$ if $T$ is distant enough from the interface $\Gamma$.
The ECC method can thus be expressed as
\begin{equation}
\label{eq:manyD:Eecc_effective_area}
\Eecc(y) = 
	\sum\limits_{\substack{b\in\calB\\ b\not\subset \Omega_\c}} e_{b}(y)
	+
	\sum_{T\in\calT}
	\sum_{r\in\calR}
	\Omega_{T,r} \phi(\Dc{r} y|_T).
\end{equation}

The QCE method uses a similar form of the continuum energy:
\[
\Eqc(y) :=
\sum\limits_{i\in\calI_\a}
\frac12\,
\bigg(
\sum\limits_{\substack{j\in\calI \\ j-i\in\calR}}
e_{(i,j)}(y)
+
\sum\limits_{\substack{j\in\calI \\ i-j\in\calR}}
e_{(i,j)}(y)
\bigg)
+
\sum_{T\in\calT}
\Omega^{\rm qc}_T
\sum_{r\in\calR}
\phi(\Dc{r} y|_T)
,
\]
where $\Omega^{\rm qc}_T$ is a particularly defined effective area of $T$, which also equals $|T|$ if $T$ is distant enough from the interface.
Thus, the ECC method can be interpreted as a modification of the QCE method by allowing the effective areas of the triangles to depend on a bond direction $r$.

The implementation of ECC is hence similar to the implementation of the QCE method for two-body interaction potentials with the only difference that one needs to compute (or precompute) the bond-dependent effective areas $\Omega_{T,r}$, which can be done as follows:
\begin{itemize}
\item[{\it Step} 1.] Set all $\Omega_{T,r} = |T|$.
\item[{\it Step} 2.] Loop over all $T\in\calT$ such that ${\rm dist}(T,\Gamma)<R$ and for each $T$ loop over all bonds $b = (i,i+r)\in\calB$ such that $b\not\subset\Omega_\c$ and $b$ intersects with the interior or an edge of $T$.
In the case if $b$ intersects with the interior of $T$, subtract $\frac{|b\cap T|}{|b|}$ from $\Omega_{T,r}$, where $|(x_1,x_2)|$ is the length of the interval between $x_1$ and $x_2$.
In the case if $b$ intersects with an edge of $T$, subtract $\frac12 \frac{|b\cap \partial T|}{|b|}$ from $\Omega_{T,r}$.
\end{itemize}
Note that the factor $\frac12$ is due to $\cchi_T(x)=\frac12$ for $x\in b\cap \partial T$.

\section{Extension of ACC to 2D}
\label{sec:2d:acc}

\begin{figure}
\begin{center}
\includegraphics{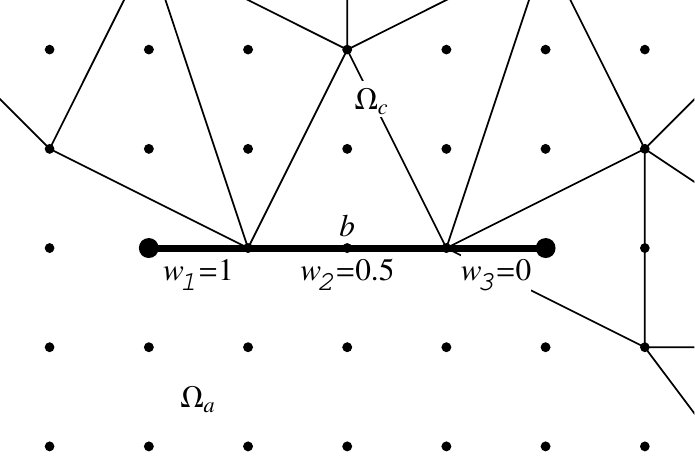}
\end{center}
\caption{An illustration of splitting a bond into several intervals in {\rm 2}D. The bond $b$ is split into three intervals, one with full contribution to the atomistic energy ($w_1=1$), the other with half contribution ($w_2=1/2$), and the third one with no contribution ($w_3=0$). The ``weights'' $w_1,w_2,w_3$ are defined in \eqref{eq:manyD:wm-definition}.}
\label{fig:bond-crosses-Omegac}
\end{figure}

In section \ref{sec:2d:acc:formulation} we will formulate another two-dimensional coupling method, which we call ACC, whose continuum energy is exactly $E_\c(y)$.
The method is a generalization of the respective one-dimensional method \eqref{eq:pca-definition}.
For its two-dimensional generalization we need to define the atomistic contribution of a bond coherent with the continuum contribution \eqref{eq:manyD:ec-definition}.

Then in section \ref{sec:implementation} we will discuss implementation of ACC.

\subsection{Formulation of ACC}
\label{sec:2d:acc:formulation}

We start with noticing that $\cchi_{\Omega_\a}(x)$ is piecewise constant on a given bond $b=(i,i+r)$.
In other words, there exist $0=t_0<t_1< \cdots<t_M=1$ such that
\begin{equation}\label{eq:manyD:wm-definition}
w_m := \cchi_{\Omega_\a}\!(i+r t)\big|_{t\in(t_{m-1},t_m)} = {\rm const}
\quad\text{for $m=1,\ldots,M$}
.
\end{equation}
The illustration of splitting a bond $b$ into several intervals is shown in Figure \ref{fig:bond-crosses-Omegac}.
Then we define the {\itshape atomistic contribution}
\begin{equation}\label{eq:manyD:ea-definition}
\ea_{b}(y)
=
\sum_{m=1}^M w_m (t_m-t_{m-1})\,
\phi\bigg(\Big(\sum_{m=1}^M w_m (t_m-t_{m-1})\Big)^{-1}\,\sum_{m=1}^M w_m (y_{i+rt_m}-y_{i+rt_{m-1}})\bigg)
\end{equation}
and the ACC method
\begin{equation} \label{eq:manyD:Eacc}
\Eacc(y) := 
\sum\limits_{b\in\calB} \ea_{b}(y)
+
\sum\limits_{b\in\calB} \ec_{b}(y)
=
\sum\limits_{\substack{b\in\calB \\ b\not\subset \Omega_\c}} \ea_{b}(y)
+
E_\c(y)
.
\end{equation}

\begin{remark}\label{rem:convex-region}
If the A/C interface $\Gamma$ is convex, then the calculation of $\ea_{b}(y)$ can be greatly simplified, since in that case not more than one section of $b$ can have a nonzero weight $w_m$, and if the weight is not equal to one, then this section of $b$ lies on the interface $\Gamma$.
\end{remark}

\begin{proposition}
The A/C coupling \eqref{eq:manyD:Eacc} is consistent.
\end{proposition}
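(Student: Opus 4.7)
The plan is to show consistency bond-by-bond, namely that
\[
\ea_b'(\mF x; v) + \ec_b'(\mF x; v) = e_b'(\mF x; v)
\quad\text{for every }b\in\calB,
\]
and then sum over $b\in\calB$ and invoke Proposition \ref{prop:variation_of_E_is_zero} to conclude $(\Eacc)'(\mF x; v) = E'(\mF x; v) = 0$.

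First, I would compute $\ea_b'(\mF x; v)$ for $b=(i,i+r)$. Evaluating \eqref{eq:manyD:ea-definition} at $y=\mF x$, each difference $y_{i+rt_m}-y_{i+rt_{m-1}}$ becomes $\mF r (t_m-t_{m-1})$, so $\sum_m w_m (y_{i+rt_m}-y_{i+rt_{m-1}}) = \mF r \sum_m w_m (t_m-t_{m-1})$, and the argument of $\phi$ collapses to $\mF r$. The chain rule then gives
\[
\ea_b'(\mF x; v) = \phi'(\mF r)\cdot \sum_{m=1}^M w_m \bigl(v_{i+rt_m}-v_{i+rt_{m-1}}\bigr).
\]
Now the key bookkeeping step: because $v|_{\Omega_\c}\in\calP_1(\calT)$ and is defined pointwise on the atomistic part, $\Dc{r} v$ is piecewise constant along $b$, so $v_{i+rt_m}-v_{i+rt_{m-1}} = \int_{t_{m-1}}^{t_m}\Dc{r} v(i+rt)\,dt$. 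Since by \eqref{eq:manyD:wm-definition} the weight $w_m$ coincides with $\cchi_{\Omega_\a}(i+rt)$ on $(t_{m-1},t_m)$, the sum collapses into a single integral
\[
\ea_b'(\mF x; v) = \phi'(\mF r)\cdot \mint_b \cchi_{\Omega_\a}\,\Dc{r} v\,\db.
\]

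Next, I would differentiate the continuum contribution \eqref{eq:manyD:ec-definition} at $y=\mF x$ (where $\Dc{r} y = \mF r$), obtaining
\[
\ec_b'(\mF x; v) = \phi'(\mF r)\cdot \mint_b \cchi_{\Omega_\c}\,\Dc{r} v\,\db.
\]
Adding the two expressions and using that $\cchi_{\Omega_\a}(x) + \cchi_{\Omega_\c}(x) = 1$ for a.e.\ $x\in\bbR^2$ (as is immediate from the definition \eqref{eq:manyD:chi-def} together with $\Omega_\a\cup\overline{\Omega_\c}=\Omega$), the sum simplifies to
\[
\phi'(\mF r)\cdot \mint_b \Dc{r} v\,\db = \phi'(\mF r)\cdot \Da{r} v_i = e_b'(\mF x; v),
\]
where the penultimate equality is the averaging identity stated in section \ref{sec:manyD:prelim}, and the last equality follows from \eqref{eq:exact-contribution-variation-manyD}. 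Summing over $b\in\calB$ and invoking Proposition \ref{prop:variation_of_E_is_zero} completes the argument.

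The main technical point, though not deep, is the identification of the discrete weighted sum $\sum_m w_m(v_{i+rt_m}-v_{i+rt_{m-1}})$ with the integral of $\cchi_{\Omega_\a}\,\Dc{r} v$ along $b$; this is precisely where the definition \eqref{eq:manyD:wm-definition} of $w_m$ via $\cchi_{\Omega_\a}$ and the piecewise-affine structure of $v$ come together, and it is what makes $\ea_b$ the "right" atomistic contribution consistent with $\ec_b$. Everything else is a direct chain-rule computation plus the partition $\cchi_{\Omega_\a}+\cchi_{\Omega_\c}=1$ a.e.
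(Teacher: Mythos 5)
Your proposal is correct and follows essentially the same route as the paper: compute $\ea_b'(\mF x;v)=\phi'(\mF r)\cdot\sum_m w_m(v_{i+rt_m}-v_{i+rt_{m-1}})$ and $\ec_b'(\mF x;v)=\phi'(\mF r)\cdot\sum_m(1-w_m)(v_{i+rt_m}-v_{i+rt_{m-1}})$, add them to telescope to $e_b'(\mF x;v)$, then sum over bonds and invoke Proposition \ref{prop:variation_of_E_is_zero}. The only (cosmetic) difference is that you package the cancellation as $\cchi_{\Omega_\a}+\cchi_{\Omega_\c}=1$ under a bond integral --- note that $\Dc{r}v$ is not literally defined on the portion of $b$ inside $\Omega_\a$, so that integral is best read as shorthand for the discrete weighted sum you already derived, which is exactly how the paper states it.
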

\begin{proof}
Compute the variation of $\ea_{b}(y)$:
\begin{align*}
\ea_{b}'(\mF x; v)
=~&
\sum\limits_{m=1}^M w_m (t_m-t_{m-1})\,
	\phi'(\mF r) \cdot
	\bigg(\sum\limits_{m=1}^M w_m (t_m-t_{m-1})\bigg)^{-1}
	\sum_{m=1}^M w_m (v_{i+rt_m}-v_{i+rt_{m-1}})
\\ =~&
	\phi'(\mF r) \cdot
	\sum\limits_{m=1}^M w_m (v_{i+rt_m}-v_{i+rt_{m-1}}).
\end{align*}
Using the fact that $\Dc{r}v\,(i+r t) = \frac{\d}{\d t} v(i+r t)$ (cf.\ \eqref{eq:Dc-definition}), compute the variation of $\ec_b(y)$:
\begin{align*}
\ec_{b}'(\mF x; v)
=~& \mint_{b}
	\cchi_{\Omega_\c} \phi'(\mF r) \cdot \Dc{r}v
	\db
\\ =~&
	\int_0^1
	\cchi_{\Omega_\c}\!(i+r t) \phi'(\mF r) \cdot \Dc{r} v(i+r t) \d t
\\ =~&
	\sum\limits_{m=1}^M
	\int_{t_{m-1}}^{t_m}
	\big(1-\cchi_{\Omega_\a}\!(i+r t)\big) \phi'(\mF r) \cdot \frac{\d}{\d t} v(i+r t)) \d t
\\ =~&
	\sum\limits_{m=1}^M
	(1-w_m) \phi'(\mF r) \cdot \int_{t_{m-1}}^{t_m}
	\frac{\d}{\d t} v(i+r t)) \d t
\\ =~&
	\sum\limits_{m=1}^M
	(1-w_m) \phi'(\mF r) \cdot (v_{i+rt_m}-v_{i+rt_{m-1}})
.
\end{align*}
Observe that
\[
\ea_{b}'(\mF x; v) + \ec_{b}'(\mF x; v)
=
	\phi'(\mF r) \cdot \sum\limits_{m=1}^M
	(v_{i+rt_m}-v_{i+rt_{m-1}})
=
	\phi'(\mF r) \cdot (v_{i+r}-v_{i})
= e_{b}'(\mF x; v).
\]
Hence,
\[
(\Eacc)'(\mF x; v) = \sum\limits_{b\in\calB} e_{b}'(\mF x; v) = E'(\mF x; v) = 0
;
\]
i.e., the approximation \eqref{eq:manyD:Eacc} is consistent.
\end{proof}

\begin{remark}
The choice of atomistic contribution \eqref{eq:manyD:ea-definition} is not unique.
One can, for instance, define
\[
\eamod_{b}(y)
=
\sum\limits_{m=1}^M w_m (t_m-t_{m-1})\, \phi\Big(\frac{y_{i+rt_m}-y_{i+rt_{m-1}}}{t_m-t_{m-1}}\Big)
\]
and notice that
\begin{align*}
\eamod_{b}'(\mF x; v)
=~&
\sum\limits_{m=1}^M w_m (t_m-t_{m-1})\,
\phi'(\mF r) \cdot
\frac{v_{i+rt_m}-v_{i+rt_{m-1}}}{t_m-t_{m-1}}
\\ =~&
\phi'(\mF r) \cdot
	\sum\limits_{m=1}^M w_m (v_{i+rt_m}-v_{i+rt_{m-1}})
=\ea_{b}'(\mF x; v)
,
\end{align*}
which makes the following alternative version of \eqref{eq:manyD:Eacc},
\begin{equation} \label{eq:manyD:Eaccmod}
\Eaccmod(y) := 
\sum\limits_{b\in\calB} \eamod_{b}(y)
+
\sum\limits_{b\in\calB} \ec_{b}(y)
=
\sum\limits_{\substack{b\in\calB \\ b\not\subset \Omega_\c}} \eamod_{b}(y)
+
E_\c(y)
,
\end{equation}
consistent as well.
However, in relation to Remark \ref{rem:convex-region}, one can show that the two methods will coincide in the case of a convex atomistic region.
\end{remark}

\subsection{Notes on implementation}\label{sec:implementation}

In this section we discuss some aspects of implementation of the ACC method \eqref{eq:manyD:Eacc} in 2D.

The ACC energy \eqref{eq:manyD:Eacc} can be thought of as a sum of continuum energy $E_\c(y)$ and the respective atomistic contributions over the bonds which are not fully inside $\Omega_\c$ (i.e., bonds that are inside $\Omega_\a$ or crossing the interface $\Gamma$).
The continuum energy is treated exactly as in finite elements.
We assume a triangulation of $\overline{\Omega_\c}$ whose nodes coincide with lattice sites (this will be referred to as nodal atoms) and compute the needed quantities: energy, forces, and stiffness matrix entries; the latter is required if Newton-like methods are employed for computing equilibrium of energy.

The atomistic contributions can be split into two groups, the first one involving only bonds in $\Omega_\a$ and the second one involving bonds intersecting with $\Gamma$:
\begin{equation}\label{eq:implementation:Eacc_a-split}
\Eacc_\a(y) :=
\sum\limits_{\substack{b\in\calB \\ b\not\subset \Omega_\c}} \ea_{b}(y) = 
\sum\limits_{\substack{b\in\calB \\ \bar{b}\subset \Omega_\a}} \ea_{b}(y)
+
\sum\limits_{\substack{b\in\calB \\ \bar{b}\cap\Gamma\ne\emptyset}} \ea_{b}(y) ~=: \Eacc_{\a,1}(y) + \Eacc_{\a,2}(y)
.
\end{equation}

The first sum is nothing but the standard sum of energies of bonds for atoms in $\Omega_\a$:
\[
\Eacc_{\a,1}(y) = \sum\limits_{\substack{(i,j)\in \calB \\ (i,j)\subset\Omega_\a}} e_{(i,j)}(y)
 = \sum\limits_{\substack{(i,j)\in \calB \\ (i,j)\subset\Omega_\a}} \varphi(|y_j-y_i|)
 .
\]

\begin{figure}
\begin{center}
\includegraphics{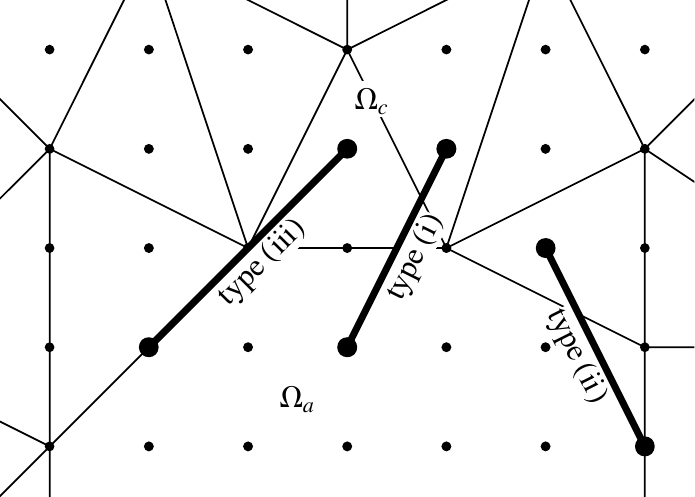}
\end{center}
\caption{An illustration of three types of bonds that should be considered when implementing ACC for convex atomistic region $\Omega_\a$: {\rm (i)} bonds that intersect $\Gamma$ at one point, {\rm (ii)} bonds that intersect $\Gamma$ at two points, and {\rm (iii)} bonds whose intersection with $\Gamma$ is an interval.}
\label{fig:convex-bonds}
\end{figure}

The second sum in \eqref{eq:implementation:Eacc_a-split} requires extra care.
In a general setting, one has to go through all the bonds: for each bond find all intersections with the interface $\Gamma$ and compute the bond's contribution according to formula \eqref{eq:manyD:ea-definition}.
According to Remark \ref{rem:convex-region}, the implementation can be simplified further if the atomistic region $\Omega_\a$ is convex (or consists of convex disjoint sets), which is indeed the case for most of the simulations of localized defects.
In this case all the bonds $b$ such that $\bar{b}\cap\Gamma\ne\emptyset$ can be further categorized into (i) bonds that intersect $\Gamma$ at one point, (ii) bonds that intersect $\Gamma$ at two points, and (iii) bonds whose intersection with $\Gamma$ is an interval (see illustration on Figure \ref{fig:convex-bonds}).

For the type-(i) bonds, we find the point of intersection of a bond $\bar{b}$ with the interface and compute the reconstruction of this point in terms of positions of nodal atoms on the interface.
Using this reconstruction, we can compute the corresponding contributions to energy, forces, and the stiffness matrix.
The type-(ii) bonds are treated in the same way, except that we need to compute reconstruction of both intersection points.
The type-(iii) bonds are treated similarly to type (ii) bonds: one should find the reconstruction of two endpoints of an intersecting interval and hence compute the contributions to energy, forces, and the stiffness matrix.
According to formula \eqref{eq:manyD:ea-definition}, one has a factor of $w_m=\frac12$ for such bonds (since $\cchi_{\Omega_\c}\big|_{\Gamma}=\frac12$, cf.\ \eqref{eq:manyD:wm-definition} and \eqref{eq:chi-2d}).
The list of coefficients of reconstruction can be precomputed once in the beginning of computation (and once per each mesh adaptation if the latter is used).
Then, at each iteration, one has to go only through this list for assembling the contributions from bonds of type (i), (ii), and (iii).

\section{Numerical experiments}\label{sec:num}

A number of numerical computations were conducted to illustrate the performance of the proposed ECC (cf.\ \eqref{eq:manyD:Eecc} or \eqref{eq:manyD:Eecc_effective_area}) and ACC (defined in \eqref{eq:manyD:Eacc}) methods in 2D and to compare them with the QCE \cite{TadmorOrtizPhillips1996}.

In all the numerical examples either the Lennard--Jones potential
\begin{equation}
\label{eq:LJ_potential}
\varphi(z) = -2 z^{-6} + z^{-12}
\end{equation}
or the Morse potential
\begin{equation}
\label{eq:Morse_potential}
\varphi(z) = -2e^{-\alpha (z-1)} + e^{-2\alpha (z-1)}
\end{equation}
was used.
With these potentials, the hexagonal lattice forms a stable equilibrium.

\begin{figure}
\begin{center}
\hfill
\subfigure[Triangulation.]{\label{fig:comparison_illustration:reference:full}
	\includegraphics{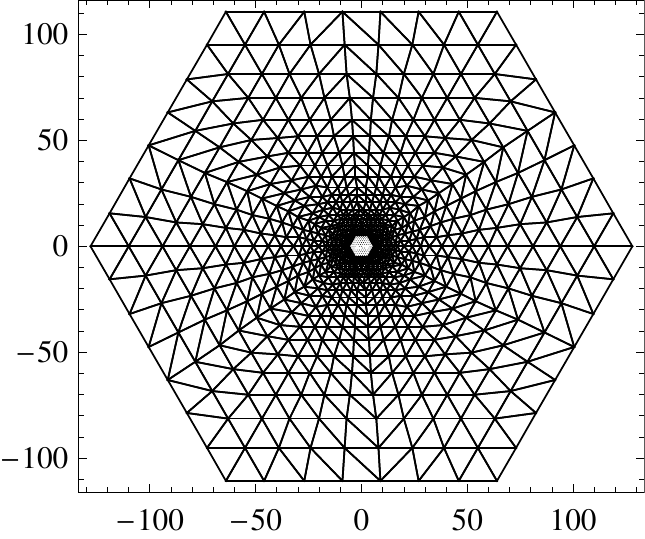}
}
\hfill \hfill
\subfigure[Closeup of the atomistic region.]{\label{fig:comparison_illustration:reference:closeup}
	\includegraphics{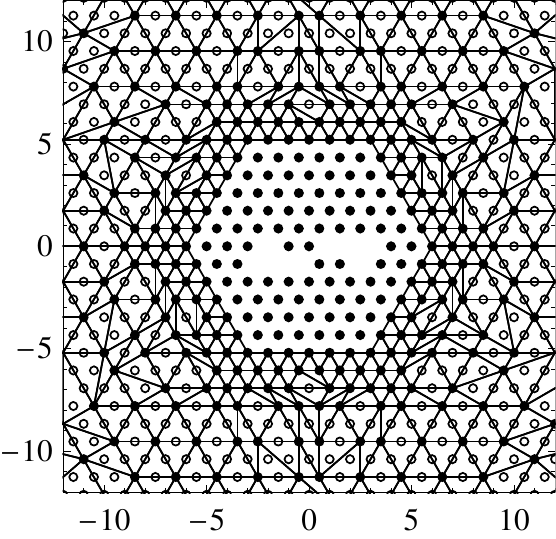}
}
\hfill $\mathstrut$
\end{center}
\caption{A reference configuration of the system of {\rm 49529} atoms forming a hexagon with the side of {\rm 129} atoms. The triangulation is shown on the left, and the closeup of the atomistic region is shown on the right.
Black atoms correspond to degrees of freedom of the system, while white atoms are kinematically ``slaved'' to the black atoms.
Eight atoms have been removed to form a defect.
The mesh is fully refined near the interface.
The illustration is for $K=6$.
}
\label{fig:comparison_illustration:reference}
\end{figure}

The test problem was chosen as follows.
We took a hexagonal atomic crystal; each side of the hexagon contains 129 atoms, and the total number of atoms in the system is 49529.
The reference atomistic configuration is illustrated in Figure \ref{fig:comparison_illustration:reference}.
Eight atoms has been removed from a perfect crystal to form a defect in its center.
The atomistic regions formed a smaller hexagon also centered at the origin whose side contained $K$ atoms, as illustrated in Figure \ref{fig:comparison_illustration:reference:closeup} for $K=6$.
The mesh was fully refined near the A/C interface for the ease of implementation of QCE.

Dirichlet-type boundary conditions were set: we extended the size of the hexagonal region by 3 atomic spacings and fixed the positions of the three added layers of atoms so that every free atom in the system had the full set of neighbors to interact with; this is done in accordance with assumptions \eqref{eq:manyD:Dirichlet-conditions-restriction-1} and \eqref{eq:manyD:Dirichlet-conditions-restriction-2}.
The ``external'' deformation gradient $\mF = \begin{pmatrix} 1&0\\ 0&0.97 \end{pmatrix}$ was applied to the positions of the added boundary atoms.
Such an external compression makes atoms occupy the empty lattice sites and form a defect, as illustrated in Figure \ref{fig:comparison_illustration:ecc6}.

\begin{figure}
\begin{center}
\includegraphics{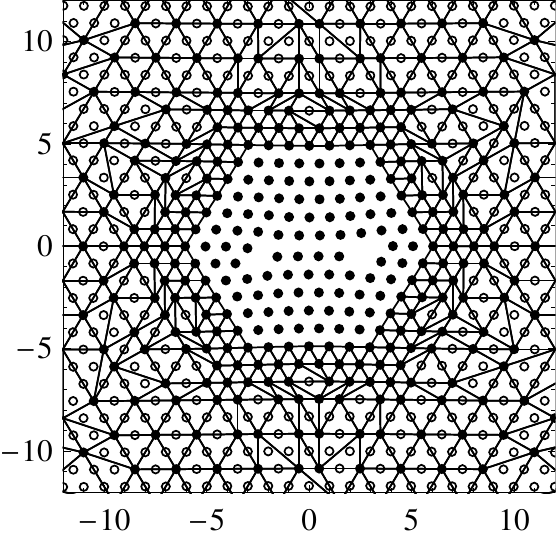}
\end{center}
\caption{A computed deformation of the atomistic system, closeup of the atomistic region.
The computation was done with the ECC method for $K=6$.
}
\label{fig:comparison_illustration:ecc6}
\end{figure}

In the numerical tests we compute the error of a local minimizer of the coupled A/C methods close to a precomputed minimizer of the exact atomistic energy.
More precisely, we first compute a local minimizer of the exact atomistic energy.
We then use it as (i) an initial guess for each computation with an A/C method, and (ii) as a reference solution to calculate the error of the A/C solution.
Of course, in ``real'' problems we cannot initialize a computation with the exact atomistic solution, as it is not known; we do this only to numerically study an approximate solution close to the exact one.

The absolute error of the numerical solution was calculated in the discrete $W^{1,\infty}$-norm.
For the test problem used in this work, the $W^{1,\infty}$-norm of the difference between the reference atomistic configuration (Figure \ref{fig:comparison_illustration:reference:closeup}) and the exact solution (Figure \ref{fig:comparison_illustration:ecc6}) was close to $1$; therefore the relative error is of the same magnitude as the absolute error.

A nonlinear conjugate gradient solver with line search \cite{Shewchuk1994} was used to find a stable equilibrium of an atomistic system.
A simple Laplace preconditioner was used to accelerate the convergence.

The discrete $W^{1,\infty}$-norm of the error of an approximate deformation was defined as follows: for each triangle formed by the neighboring atoms in the reference configuration we compute the Jacobian matrix of the mapping given by the approximate deformations of those atoms, take the difference to the corresponding Jacobian matrix of the exact solution, and compute the Frobenius norm of this difference.
Taking maximum over all triangles of neighboring atoms yields the discrete $W^{1,\infty}$-norm.

Three tests were conducted: the first one with the Lennard--Jones potential (section \ref{sec:num:comparison1}), the second one with a slowly decaying Morse potential (section \ref{sec:num:comparison2}), and the third one with the Lennard--Jones potential and a perturbed A/C interface (section \ref{sec:num:comparison3}).
The results of these computations are discussed in section \ref{sec:discussion}.

\subsection{Test with the Lennard--Jones potential}\label{sec:num:comparison1}

In the first test case we let atoms interact with the Lennard--Jones potential \eqref{eq:LJ_potential} with the cut-off distance $R=3.1$.
We computed the solutions for $5\le K\le 50$ with the two methods, ECC and QCE, and calculated their errors.

\begin{figure}
\begin{center}
\includegraphics{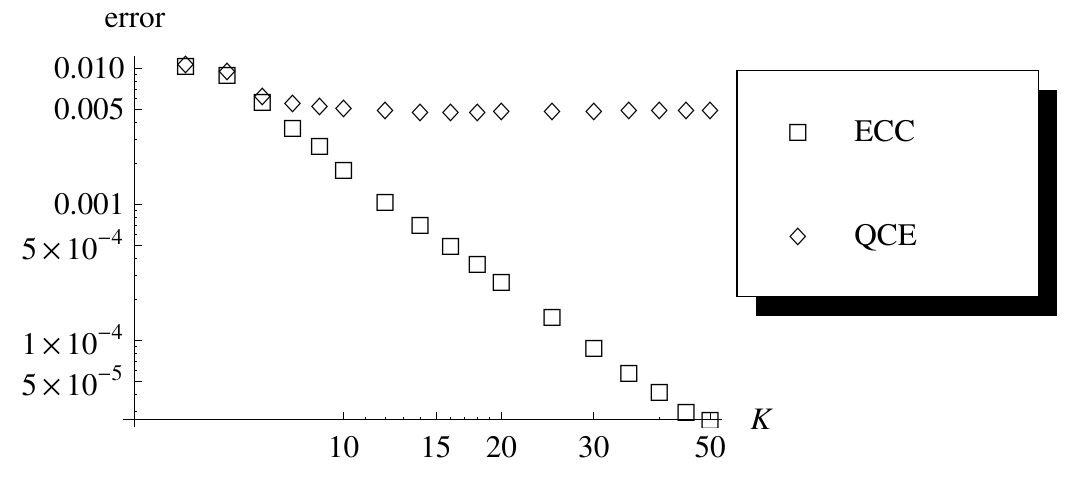}
\end{center}
\caption{$W^{1,\infty}$-errors of the computed solutions for the test with the Lennard--Jones potential.
	Computations were done by the two methods, ECC (cf.\ \eqref{eq:manyD:Eecc} or \eqref{eq:manyD:Eecc_effective_area}) and QCE \cite{TadmorOrtizPhillips1996}, for different sizes of the atomistic region ($5\le K\le 50$) and compared with the exact atomistic solution.
	For a small size of the atomistic region ($K=5,6,7$), both methods have a comparable error.
	However, for a larger size of the atomistic region ($K \gtrapprox 8$), the ECC method shows a steady convergence whereas the error of QCE remains $\calO(1)$ due to ghost forces.
}
\label{fig:comparison1}
\end{figure}

The results of computations are shown in Figure \ref{fig:comparison1}.
We can see that the ECC method does converge whereas QCE fails to converge due to ghost forces.
The error of QCE shows an initial decrease in error for small sizes of the atomistic region ($K=5,6,7$) but remains at a level of approximately $5\times 10^{-3}$ as $K$ is further increased.

The error magnitude of $5\times 10^{-3}$ is normally acceptable in the engineering applications.
Such a small error of QCE was due to the second nearest neighbor interaction being relatively weak compared to the nearest neighbor interaction for the Lennard--Jones potential.
In the next section we will see the results for the test case where the second nearest neighbor interaction is considerable.

\subsection{Test with a slowly decaying Morse potential}\label{sec:num:comparison2}

In this test case we chose the Morse potential \eqref{eq:Morse_potential} with $\alpha=3$ and the cut-off distance $R=3.1$.
The strength of such an interaction decays rather slowly, and the ghost force effects are more pronounced in this case.
The external compression $\mF = \begin{pmatrix} 0.97&0\\ 0&0.95 \end{pmatrix}$ was set as the boundary condition.
The exact solution for this test is similar to the one shown in Figure \ref{fig:comparison_illustration:ecc6}.

\begin{figure}
\begin{center}
\includegraphics{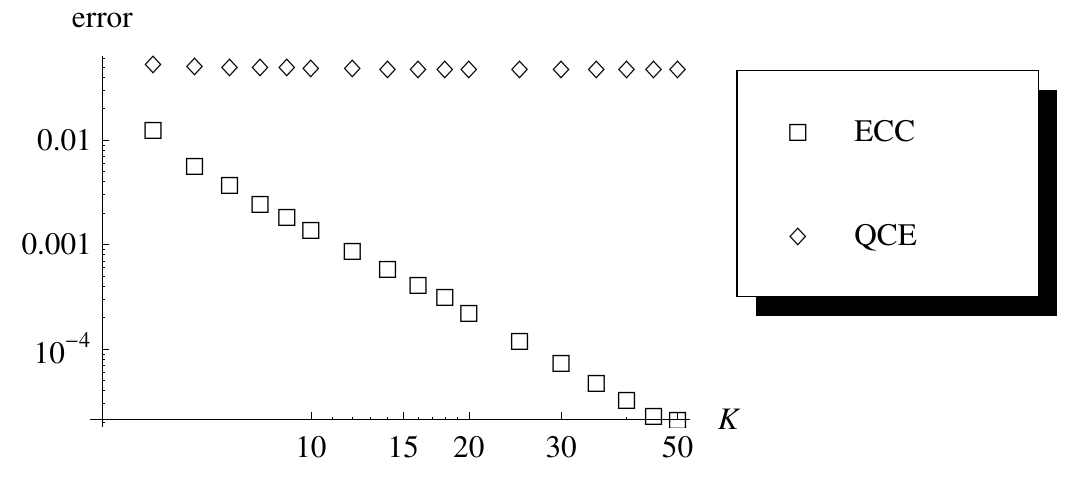}
\end{center}
\caption{$W^{1,\infty}$-errors of the computed solutions for the test with slowly decaying Morse potential.
	Computations were done by the two methods, ECC (cf.\ \eqref{eq:manyD:Eecc} or \eqref{eq:manyD:Eecc_effective_area}) and QCE \cite{TadmorOrtizPhillips1996}, for different sizes of the atomistic region ($5\le K\le 100$) and compared with the exact atomistic solution.
	The ECC method shows steady convergence as the size of the atomistic region increases.
	In contrast, the error of the QCE method is entirely dominated by the ghost force. 
}
\label{fig:comparison2}
\end{figure}

The results of computations are shown in Figure \ref{fig:comparison2}.
As can be seen, for such a slowly decaying interaction the QCE method exhibits a rather different behavior:
its error stayed at the level of $0.05$ no matter how large the atomistic region was.
The error of the ECC method, in contrast, steadily decayed with increasing $K$ and showed a behavior similar to the test case with the Lennard--Jones potential.

\subsection{Test with a nonaligned interface}\label{sec:num:comparison3}

In the last test the errors of the ECC and the ACC methods were compared for an aligned and a nonaligned A/C interface.
The test case is similar to the first one (section \ref{sec:num:comparison1}), except for a coarser mesh in the continuum region (see Figure \ref{fig:comparison3_illustration}).

\begin{figure}
\begin{center}
\hfill
\subfigure[Aligned A/C interface.]{\label{fig:comparison3_illustration:aligned}
	\includegraphics{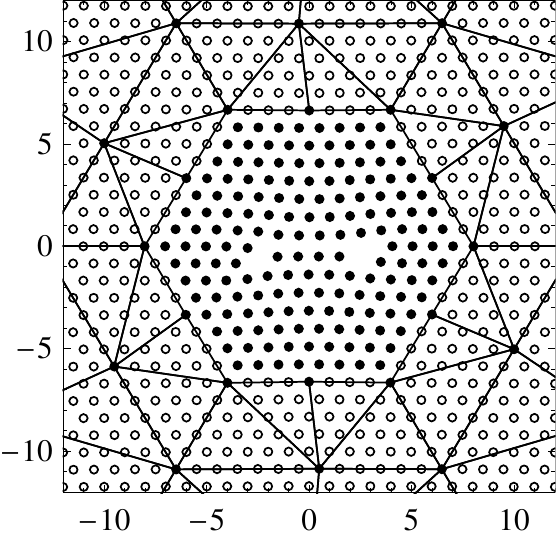}
}
\hfill \hfill
\subfigure[Nonaligned A/C interface.]{\label{fig:comparison3_illustration:nonaligned}
	\includegraphics{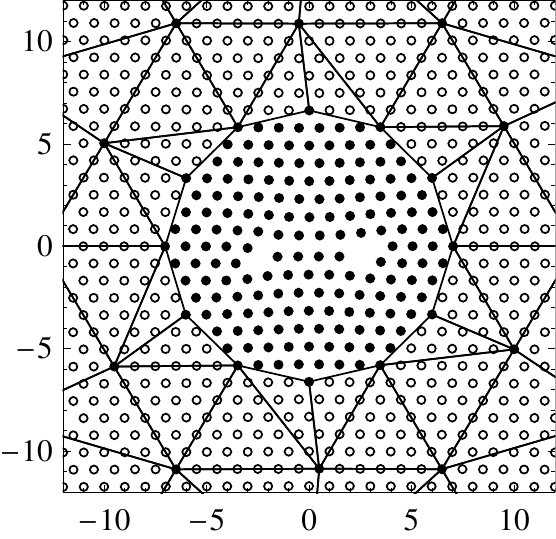}
}
\hfill $\mathstrut$
\end{center}
\caption{A/C interface of the test with the nonaligned interface (atomistic domain size $K=8$).
}
\label{fig:comparison3_illustration}
\end{figure}

\begin{figure}
\begin{center}
\includegraphics{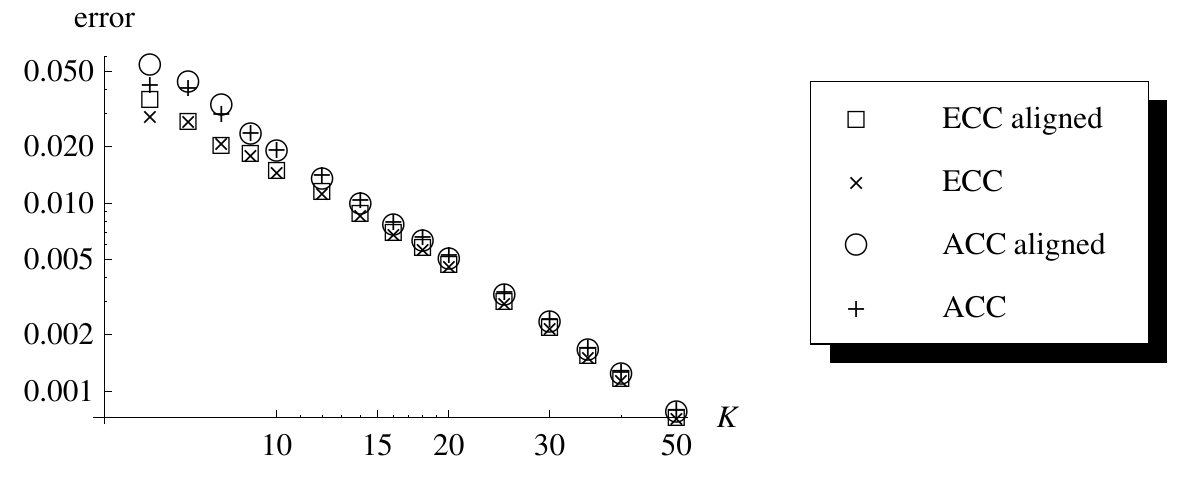}
\end{center}
\caption{$W^{1,\infty}$-errors of the computed solutions for the test with the Lennard--Jones potential and nonaligned interface.
	Computations were done by the two methods, ECC (cf.\ \eqref{eq:manyD:Eecc} or \eqref{eq:manyD:Eecc_effective_area}) and ACC (defined in \eqref{eq:manyD:Eacc}), for different sizes of the atomistic region ($6\le K\le 50$) and compared with the exact atomistic solution.
	The squares and circles correspond to the errors of ECC and ACC for the aligned interface, while the crosses and pluses correspond to the nonaligned interface.
	Both methods exhibit steady convergence as $K$ is increased, with almost no difference whether the interface was aligned or not.
}
\label{fig:comparison3}
\end{figure}

The results of computations are shown in Figure \ref{fig:comparison3}.
One can observe that for neither ECC nor for ACC the perturbation of the interface affected the error.
Also, the error of ECC was slightly less for small $K$ as compared to ACC, but for large $K$ the errors were very close.

\section{Discussion}\label{sec:discussion}

The main observation that can be made from the results of computations is that the proposed ECC and ACC methods exhibit steady convergence as the atomistic domain size is increased and are not sensitive to whether the nearest neighbor interaction dominate or whether the A/C interface is aligned with the lattice.
The solution by the QCE method, in contrast, does not converge due to ghost forces.
For a rapidly decaying potential, the error of QCE was within a practically acceptable limit but was found to be considerably larger for a slowly decaying interaction.

In the present paper we assumed that both the partition of the material into atomistic and continuum regions and the mesh in the continuum region are given.
In practice, a good choice of the regions and the mesh are often not known a priori, and one has to rely on certain algorithms to determine them adaptively (see, for instance, \cite{MillerTadmor2002, PrudhommeBaumanOden2006}).
However, rigorous a posteriori error bounds were proved only in 1D for consistent energy-based coupled methods  \cite{ArndtLuskin2008MMS, OrtnerSuli2008}.
The purely energy-based formulation of the proposed method and its consistency may help with deriving rigorous a posteriori error bounds in 2D.

Unfortunately, the bond formulation of the A/C energy developed in this work, according to Remark \ref{rem:no3D}, cannot be applied directly to couple the exact atomistic model with the Cauchy--Born continuum model in three dimensions (3D).
Nevertheless, it is possible to formulate an efficient numerical algorithm based on the bond formulation of the A/C energy in 3D \cite{Shapeev2011}.

The major challenge that remains to be addressed for the present method to become competitive with existing methods on realistic engineering problems is the formulation of the method for many-body potentials, as the two-body potentials are far from covering all existing atomistic models.
In this regard, we should note that there exists a method of A/C coupling (namely, the geometrically consistent scheme of E, Lu, and Yang \cite{ELuYang2006}) which is free from ghost forces under the restriction of planar interface with no corners but for general potentials.

It would also be interesting to extend the present method to coupling an atomistic model with a nondiscretized continuum model in 2D and 3D.
In that case one can discretize the continuum model with higher-degree polynomials and obtain an increased accuracy of the overall method.
Another extension which would be useful for applications is to formulate the proposed method for complex lattices (for a possible application, see \cite{Kastner2003} for two-dimensional models of metallic alloys described in terms of two-body potentials).

\section{Conclusion} \label{sec:conclusion}

We considered the problem of consistent coupling of atomistic and continuum models of materials, limited to the case of a two-body potential and one or two spatial dimensions.
We proposed two versions of energy-based coupling which are consistent (i.e., do not suffer from ghost forces).
The coupling is based on judiciously defining the contributions of the atomistic bonds to the discrete and the continuum potential energies.
The same coupling in 1D has been independently proposed and analyzed in \cite{LiLuskin}.
The latest works conducted on the proposed coupling include its numerical analysis in 2D \cite{OrtnerShapeev2011} and extension to 3D \cite{Shapeev2011}.

\section*{Acknowledgments} 
The author is indebted to Christoph Ortner for motivating discussions during the OxMOS/MD-network workshop and for many valuable comments and suggestions on the manuscript.
The author is thankful to an anonymous referee for the useful comments that helped to improve this paper.

\end{document}